\newtheorem{theorem}{Theorem}
\newtheorem{lemma}{Lemma}
\newtheorem{example}[lemma]{Example}
\newtheorem{proposition}[lemma]{Proposition}
\newtheorem{remark}[lemma]{Remark}
\newtheorem{definition}[lemma]{Definition}
\newenvironment{proof}[1][Proof]{\noindent\textbf{#1.} }{\
\rule{0.5em}{0.5em}}
\def\k{\mathbb{K}}
\newcommand{\syz}{\mathrm{syz}}
\newcommand{\rank}{\mathrm{rank}}
\begin{document}

\title{Algorithm for computing \\
$\mu$-bases of univariate polynomials}
\author{Hoon Hong, Zachary Hough, Irina A. Kogan}
\date{}
\maketitle

\abstract {We present a new algorithm for computing a $\mu$-basis of the syzygy module of $n$ polynomials in one variable over an arbitrary field $\k$. The algorithm is conceptually  different from the previously-developed algorithms  by Cox, Sederberg, Chen, Zheng, and Wang 
for $n=3$, and by Song and Goldman   
for an arbitrary $n$.
 {The algorithm} involves computing a  ``partial'' reduced row-echelon  form of a  $ (2d+1)\times n(d+1)$ matrix over $\k$, where $d$ is the maximum degree of the input  polynomials. The proof of the algorithm is  based on standard linear algebra and is completely self-contained. {The proof} includes a proof of the existence of the $\mu$-basis  and  as a consequence provides an alternative proof of the freeness of the syzygy module.
The theoretical (worst case asymptotic) computational complexity  of the algorithm is $O(d^2n+d^3+n^2)$. We have implemented this algorithm (HHK)\ and the one developed by Song and Goldman  (SG). Experiments on random inputs indicate that 
SG is faster than HHK when $d$ is sufficiently large for a fixed $n$, and that
HHK is faster than SG when $n$ is sufficiently large for a fixed $d$.
 }

\vskip2mm
\noindent {\bf Keywords:} $\mu$-basis; syzygy module; polynomial vectors; rational curves.
\vskip2mm
\noindent {\bf MSC 2010:} 12Y05, 13P10, 14Q05, 68W30.
%%%

\section{Introduction}

%%%%
Let $a[s]=[a_1(s),\dots, a_n(s)]$ be a vector of univariate polynomials
over a field $\k$. It is well-known that the syzygy module of $a$, consisting of linear relations over $\k[s]$ among $a_1(s),\dots, a_n(s)$:
$$\syz(a)=\{h\in  \k[s]^{n}\,|\,a_1\,h_1+\dots+a_n\,h_n=0 \}$$ 
is free.\footnote{ Freeness of the syzygy module in the one-variable can be deduced  from the Hilbert Syzygy Theorem \cite{hilbert-1890}. In the multivariable case, the syzygy module of a polynomial vector is not always free (see, for instance, \cite{cloG})} This means that the syzygy module  has a basis, and, in fact, infinitely many bases. A  $\mu $-basis is a basis 
with  particularly nice properties, which we describe in more detail in the next section. 

The concept  of a $\mu$-basis first appeared in  \cite{cox-sederberg-chen-1998}, motivated by  the search  for new, more efficient  methods for solving implicitization problems for rational curves, and as  a further development of the method of moving lines (and, more generally, moving curves) proposed in  \cite{sederberg-chen-1995}. 
Since then, a large body of literature on the applications of $\mu$-bases to various problems involving vectors  of univariate polynomials has appeared, such as \cite{chen-cox-liu-2005, song-goldman-2009, jia-goldman-2009,tesemma-wang-2014}.\footnote{A notion of a $\mu$-basis for vectors of polynomials in two variables also has been developed and applied to the study of rational surfaces in three-dimensional projective space (see, for instance, \cite{chen-cox-liu-2005,shi-wang-goldman-2012}). This paper is devoted solely to the one-variable case.} The variety of possible applications motivates the development of  algorithms for computing $\mu$-bases. Although a   proof of the existence  of a $\mu$-basis for arbitrary $n$  appeared  already in \cite{cox-sederberg-chen-1998}, the algorithms were first developed for the $n=3$ case only \cite{cox-sederberg-chen-1998, zheng-sederberg-2001, chen-wang-2002}. The first algorithm for arbitrary $n$ appeared in \cite{song-goldman-2009}, as a generalization of \cite{chen-wang-2002}. 

This paper presents an alternative algorithm for an arbitrary $n$. 
The proof   of the algorithm does not rely on previously established theorems about  the freeness of the syzygy module or the existence of a $\mu$-basis, and, therefore, as a by-product, provides an alternative, self-contained, constructive proof of these facts.
In the rest of the introduction, we informally  sketch the main idea underlying this new  algorithm, compare it with previous algorithms, and briefly describe its performance.

\vskip2mm \noindent\emph{Main idea:} 
 It is well-known that the syzygy module of $a$,  $\syz(a),$  is generated by the set 
$\syz_d(a) $ of syzygies of degree at most $d=\deg(a)$. The set $
\syz_d(a)$ is obviously a $\k$-subspace of $\k[s]^n$. Using the standard monomial basis, it is easy to see that this subspace is
isomorphic to the kernel of a certain linear map $A\colon \k%
^{n(d+1)}\to \k^{2d+1}$ (explicitly given by \eqref{eqA} below). Now
we come to the {\em key} idea: one can {\em systematically} choose a suitable finite subset of
the kernel of $A$ so that the corresponding subset of $\syz_d(a)$ forms  a $\mu$%
-basis. We elaborate on how this is done. Recall that a column of a matrix
is called \emph{non-pivotal} if it is either the first column and zero, or it is a
linear combination of the previous columns. Now we  observe and prove a remarkable fact:  the set of
indices of non-pivotal columns of $A$ splits into exactly $n-1$ sets of
modulo-$n$-equivalent integers. By taking the smallest representative in
each set, we obtain $n-1$ integers, which we call \emph{basic
non-pivotal} indices. The set of non-pivotal indices of $A$ is equal to the
set of non-pivotal indices of its  reduced row-echelon form $E$. From each
non-pivotal column of $E$, an element of $\mathrm{ker}(A)$ can easily be
read off, that, in turn, gives rise to an element of $\syz(a)$,
which we call \emph{a row-echelon syzygy}. We prove that the row-echelon
syzygies corresponding to the $n-1$ \emph{basic non-pivotal} indices
comprise a $\mu$-basis. Thus, a $\mu$-basis can be found by computing the reduced row-echelon form of a single $(2d+1)\times n(d+1)$ matrix $A$ over~$\k$.
 Actually, it is sufficient to compute only a ``partial'' reduced row-echelon form containing  only  the basic non-pivotal columns  and the preceding pivotal columns.
 %%%% 
\vskip2mm \noindent\emph{Relation to the previous algorithms:} 
Cox, Sederberg and Chen \cite{cox-sederberg-chen-1998} implicitly suggested an algorithm 
for the $n=3$ case. Later, it was explicitly described in the
Introduction of \cite{zheng-sederberg-2001}.  The algorithm relies  on the fact that, in the $n=3$ case,  there are only two  elements in a  $\mu$-basis,  and their degrees 
(denoted as $\mu_1$ and $\mu_2$) 
can be determined \emph{prior} to computing the basis (see Corollary 2 on p.~811 of 
\cite{cox-sederberg-chen-1998}  and p.~621 of \cite{zheng-sederberg-2001}). 
Once the  degrees are determined, two  syzygies are constructed  from null vectors of two linear maps
  $A_1\colon \k^{3(\mu_1+1)}\to \k^{\mu_1+d+1}$  and $A_2\colon \k^{3(\mu_2+1)}\to \k^{\mu_2+d+1}$
(similar to the one described above).
Special care is taken to ensure that these syzygies are linearly independent over $\k[s]$. These  two syzygies comprise a $\mu$-basis. 
It is not clear, however, how this method can be generalized to arbitrary $n$. First,  as far as we are aware, there is not yet an efficient way to  determine  the degrees of $\mu$-basis members  {\it a priori}. Second, there is not yet an efficient way for choosing
appropriate null vectors so that the resulting syzygies are
linearly independent.

 Zheng and Sederberg \cite{zheng-sederberg-2001} gave a different algorithm for the $n=3$ case,  based on Buchberger-type reduction.  A more efficient modification was proposed by   Chen and Wang \cite{chen-wang-2002}, and was subsequently generalized to arbitrary $n$ by  Song and Goldman \cite{song-goldman-2009}.     The general algorithm   starts by observing that the set of the obvious syzygies $\{[\quad -a_j \quad a_i\quad]\,|\, 1\leq i<j\leq
n\}$ generates $\syz(a)$, provided $\gcd(a)=1$. Then Buchberger-type
reduction is used to reduce the degree of one of the syzygies at a time. It
is proved that when such reduction becomes impossible, one is left with
exactly $n-1$ non-zero syzygies that comprise a $\mu$-basis. If $\gcd(a)$ is
non-trivial, then the output is a $\mu$-basis multiplied by $\gcd(a)$. We note that, in contrast, the algorithm
developed in this paper outputs a $\mu$-basis even in the case when $\gcd(a)$
is non-trivial. See Section~\ref{sect-discussion} for more details.

%%%
\vskip2mm \noindent\emph{Performance:} 
We show that the algorithm in this paper  has theoretical complexity $O(d^2n+d^3+n^2)$, assuming that the  arithmetic takes constant time (which is the case when the field $\k$ is finite).
  We have implemented our algorithm (HHK),\ as well as Song and Goldman's 
\cite{song-goldman-2009} algorithm (SG) in Maple~\cite{maple}. 
Experiments on random inputs indicate that 
SG is faster than HHK when $d$ is sufficiently large for a fixed $n$ and that
HHK is faster than SG when $n$ is sufficiently large for a fixed $d$.

\vskip2mm \noindent\emph{Structure of the paper:} 
In Section~\ref%
{sect-problem}, we give a rigorous definition of a $\mu$-basis, describe its characteristic properties, and formulate
the problem we are considering. In Section~\ref{sect-syzd}, we prove several
lemmas about the vector space of syzygies of degree at most $d$, and the
role they play in generating the syzygy module. In Section~\ref{sect-resyz},
we define the notion of \emph{row-echelon syzygies} and explain how they can be
computed. This section contains our main  theoretical result, Theorem~\ref{thm-mubasis}, which explicitly identifies a subset of  \emph{row-echelon syzygies} that comprise a $\mu$-basis.
In
Section~\ref{sect-alg}, we present an algorithm for computing a $\mu$-basis.
In Section~\ref{sect-compl}, we analyze the theoretical (worst case
asymptotic) computational complexity of this algorithm. 
 In Section~\ref{sect-experiment}, we discuss implementation and
experiments, and  compare  the performance of  the algorithm
presented  here with the one described in~\cite{song-goldman-2009}.
We conclude the paper with a more in-depth discussion and comparison with previous works on $\mu$-bases and related problems in Section~\ref{sect-discussion}.  

%\section{Problem}
\section{$\mu$-basis of the syzygy module.}
%%%%
\label{sect-problem}  
Throughout this paper, $\k$ denotes a
field and  $\k[s]$ denotes a ring of polynomials in one
indeterminate~$s$. 
  The symbol $n$ will be reserved for the length of the polynomial vector $a$, whose syzygy module we are considering, and  from now on we assume $n>1$, because for the $n=1$ case the problem is trivial.  The symbol $d$ is reserved for the  degree of $a$. %\footnote{In several previous papers on $\mu$-bases, $n$ was  used for the degree of $a$ and, in \cite{cox-sederberg-chen-1998}, $d$ was used to denote $\text{length}(a)-1$  (the dimension of the ambient space of the rational curve defined by $a$).}  
We also will assume that $a$ is a \emph{non-zero vector}. All vectors are implicitly assumed to be \emph{column
vectors}, unless specifically stated otherwise.  Superscript $\phantom{.}^T$ denotes transposition.

\begin{definition}[Syzygy]
Let $a=[a_{1},\dots ,a_{n}]\in \k[s]^{n}$ be a \emph{row} $n$-vector
of polynomials. The \emph{syzygy} set of $a$ is 
\begin{equation*}
\syz(a)=\{h\in \k[s]^{n}\,|\,a\,h=0\}.
\end{equation*}
\end{definition}
\noindent We emphasize that $h$ is by default a column vector and   $a$  is explicitly  defined to be a row vector, so that the product  $a\,h$ is well-defined. It is easy to check that  $\syz(a)$ is a $\k[s]$-module.  To define  a $\mu$-basis, we need  the following terminology:
%%%

%%%%%%

\begin{definition}[Leading vector]\label{def-basic}
For $h\in \k[s]^{n}$ we define the \emph{degree} and the \emph{%
leading vector} of $h$ as follows:

\begin{itemize}
\item $\displaystyle{\deg (h)=\max_{i=1,\dots ,n}\deg (h_{i})}$.

\item $LV(h)=[\mathrm{coeff}(h_{1},t),\dots ,\mathrm{coeff}(h_{n},t)]^T\in 
\k^{n}$, where $t=\deg (h)$ and $\mathrm{coeff}(h_{i},t)$ denotes
the coefficient of $s^{t}$ in $h_{i}$.
\end{itemize}
\end{definition}

\begin{example}
Let $h=\left[ 
\begin{array}{c}
1-2s-2s^{2}-s^{3} \\ 
2+2s+s^{2}+s^{3} \\ 
-3%
\end{array}%
\right] $. Then $\deg (h)=3$ and $LV(h)=\left[ 
\begin{array}{r}
-1 \\ 
1 \\ 
0%
\end{array}%
\right] .$
\end{example}
%%%%
Before giving the definition of a $\mu$-basis, we state a proposition that asserts the equivalence of several statements, each of which can be taken as a definition of a  $\mu$-basis. 
%%%%%
\begin{proposition}\label{prop-equiv} For a subset $u=\{u_1,\dots,u_{n-1}\}\subset \syz(a)$, ordered so that $\deg (u_1)\leq\dots\leq \deg (u_{n-1})$, the following properties are equivalent:
%%%%
 \begin{enumerate}
 \item  \label{pr-LV}  {\em [independence of the leading vectors]}  The set $u$ generates $\syz(a)$, and the leading vectors $LV(u_{1}),\dots ,LV(u_{n-1})$ are independent over $\k$.
\item\label{pr-min} {\em [minimality of the degrees]}  The set $u$ generates $\syz(a)$,  and  if $h_1, \dots, h_{n-1}$ is any generating set of   $\syz(a)$, such that $\deg (h_1)\leq\dots\leq \deg (h_{n-1})$, then $\deg (u_i)\leq \deg (h_i)$ for $i=1,\dots, n-1$. %
\item \label{pr-sum}{\em [sum of the degrees}]  The set $u$ generates $\syz(a)$, and $\deg (u_1)+\dots + \deg(u_{n-1})=\deg(a)-\deg(\gcd(a))$.
\item \label{pr-reduced}{\em [reduced representation]}  For every $h\in \syz(a)$, there exist $g_1,\dots, g_{n-1}\in\k[s]$ such that 
$\deg(g_i)\leq \deg(h)-\deg(u_i)$ and 
\begin{equation}\label{eq-reduced} h=\sum_{i=1}^{n-1} g_i\,u_i.
\end{equation}
\item \label{pr-outer} {\em [outer product]} There exists a non-zero constant $\alpha\in \k$ such that the outer product of 
$u_1,\dots, u_{n-1}$ is equal to $\alpha\, a/\gcd(a)$.
\end{enumerate} 

\end{proposition}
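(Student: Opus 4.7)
The plan is to establish the cycle of implications (1) $\Rightarrow$ (4) $\Rightarrow$ (2) $\Rightarrow$ (3) $\Rightarrow$ (1) among the first four conditions, and then separately prove (1) $\Leftrightarrow$ (5) via the outer product. Throughout I will use the classical fact that $\syz(a)$ is free of rank $n-1$ (acknowledged in the footnote after the definition of $\syz(a)$), so that any generating set of size $n-1$ is automatically a $\k[s]$-basis.

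For (1) $\Rightarrow$ (4), given $h \in \syz(a)$ use that $u$ generates to write $h = \sum f_i u_i$ for some $f_i \in \k[s]$, and set $T = \max_i(\deg f_i + \deg u_i)$. If $T > \deg h$, then the degree-$T$ component of $\sum f_i u_i$ must vanish; but this component equals $\sum_{\deg f_i + \deg u_i = T} \mathrm{lc}(f_i)\, LV(u_i)$, a nontrivial $\k$-linear combination of leading vectors, contradicting (1). Hence $T \le \deg h$, giving the required bound $\deg f_i \le \deg h - \deg u_i$.

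For (4) $\Rightarrow$ (2), given any generating set $h_1, \dots, h_{n-1}$ with $\deg h_1 \le \dots \le \deg h_{n-1}$, both $u$ and $h$ are $\k[s]$-bases of $\syz(a)$. Applying (4) to each $h_j$ produces a unimodular change-of-basis matrix $M \in \mathrm{GL}_{n-1}(\k[s])$ with $\deg M_{ij} \le \deg h_j - \deg u_i$. If $\deg u_k > \deg h_k$ for some $k$, then $M_{ij} = 0$ whenever $i \ge k$ and $j \le k$, forcing the first $k$ columns of $M$ to lie in a $(k{-}1)$-dimensional $\k[s]$-submodule and hence be $\k[s]$-linearly dependent, contradicting unimodularity. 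For (2) $\Rightarrow$ (3) and (3) $\Rightarrow$ (1), I would invoke the fact that the minimum value of $\sum_i \deg u_i$ over all generating sets equals $\deg(a) - \deg\gcd(a)$: condition (2) attains this minimum, while if (1) failed then the dependence among the leading vectors would enable a single reduction step (replace some $u_j$ by a $\k$-combination of lower degree) producing a generating set with strictly smaller degree sum, contradicting the equality in (3).

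Finally, (1) $\Leftrightarrow$ (5) is proven via the outer product $v$, whose $k$-th entry is the signed $(n{-}1) \times (n{-}1)$ minor of $[u_1 \mid \cdots \mid u_{n-1}]$ obtained by deleting the $k$-th row. Cofactor expansion gives $v \cdot u_i = 0$ for each $i$. Under (1) the $u_i$ are $\k[s]$-linearly independent, so $v \ne 0$; combined with $a \cdot u_i = 0$, both $v$ and $a/\gcd(a)$ lie in (and, by primitivity, generate) the rank-one $\k[s]$-submodule of $\k[s]^n$ orthogonal to $\syz(a)$, so $v = \lambda\, a/\gcd(a)$ for some $\lambda \in \k[s]$. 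A degree count using $\deg v = \sum_i \deg u_i = \deg(a) - \deg\gcd(a)$ forces $\deg \lambda = 0$, i.e., $\lambda \in \k^\times$. Conversely, (5) immediately yields that the $u_i$ are $\k[s]$-independent and generate $\syz(a)$. The main obstacle throughout is the degree-sum identity $\sum_i \deg u_i = \deg(a) - \deg\gcd(a)$, which underlies (2) $\Rightarrow$ (3), (3) $\Rightarrow$ (1), and the constant-check in (5); establishing it rigorously requires either a Hilbert-function / dimension count for the $\k$-vector spaces of bounded-degree syzygies, or careful degree bookkeeping on the outer product, with additional care when $\gcd(a) \ne 1$.
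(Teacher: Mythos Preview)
The paper does not actually prove this proposition: it observes that $\syz(a)=\syz(a/\gcd(a))$, which reduces matters to the case $\gcd(a)=1$, and then simply defers to the proof of Theorem~2 in Song--Goldman. Your outline therefore goes well beyond what the paper offers, and the cycle $(1)\Rightarrow(4)\Rightarrow(2)\Rightarrow(3)\Rightarrow(1)$ together with a separate treatment of~$(5)$ is essentially the standard route taken in that reference. Your arguments for $(1)\Rightarrow(4)$ and $(4)\Rightarrow(2)$ are correct, and you rightly identify the degree--sum identity as the only nontrivial missing ingredient for the remaining implications; the Hilbert--function count you sketch (reduce to $\gcd(a)=1$ and compute $\dim_\k\syz_t(a)$ in two ways for $t$ large) is exactly how one fills that gap.

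There is, however, a genuine problem with the direction $(5)\Rightarrow(1)$. You assert that $(5)$ ``immediately yields that the $u_i$ are $\k[s]$-independent and generate $\syz(a)$''; this is true, but it only shows that $u$ is a basis, not that the leading vectors are independent. In fact $(5)\Rightarrow(1)$ is \emph{false} as the proposition is stated: take the $\mu$-basis $u_1,u_2$ from Example~\ref{ex-prob} (degrees $1$ and $3$) and set $u_1'=u_1+u_2$. The outer product of $u_1',u_2$ equals that of $u_1,u_2$ (an elementary column operation leaves every $2\times 2$ minor unchanged), so $\{u_1',u_2\}$ satisfies~$(5)$; yet $LV(u_1')=LV(u_2)$, so~$(1)$ fails, and indeed $\deg u_1'+\deg u_2=6\neq 4$, so~$(3)$ also fails. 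Thus no argument can close this gap --- the proposition as written has a flaw at condition~$(5)$. Your direction $(1)\Rightarrow(5)$, by contrast, is correct once the degree--sum identity has been obtained via the cycle.
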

Here and below $\gcd(a)$ denotes the greatest common monic devisor of the polynomials $a_1, \dots, a_n$.
The above proposition is a slight rephrasing of  Theorem~2 in \cite{song-goldman-2009}. The only notable difference is that we drop the assumption that $\gcd(a)=1$ and modify Statements~\ref{pr-sum} and~\ref{pr-outer} accordingly.
After making  an observation that $\syz (a)=\syz\left(a/{\gcd(a)}\right)$, one can easily check that
a proof of Proposition~\ref{prop-equiv} can follow the same lines  as the proof of   Theorem~2 in \cite{song-goldman-2009}. \emph {We do not use Proposition~\ref{prop-equiv}  to derive and justify our algorithm for computing a $\mu$-basis, and therefore we are not including its proof.} 
We include  this proposition to underscore several important properties of a $\mu$-basis and to facilitate comparison with the  previous work on the subject.

Following  \cite{song-goldman-2009}, we base our definition of a $\mu$-basis on Statement~\ref{pr-LV} of Proposition~\ref{prop-equiv}. We are making this choice, because in the process of proving the existence of a $\mu$-basis, we explicitly construct a set of $n-1$ syzygies  for which  Statement~\ref{pr-LV} can be easily verified, while verification of  the other statements of Proposition~\ref{prop-equiv} is not immediate.
  The original definition of a $\mu$-basis (p.~824 of  \cite{cox-sederberg-chen-1998}) is  based on  the sum of the degrees property (Statement~\ref{pr-min} of Proposition~\ref{prop-equiv}). In Section~\ref{sect-discussion}, we  discuss the advantages of the original definition.

%%%%%%
{\begin{definition}[$\protect\mu $-basis]
\label{def-mubasis} For a non-zero row vector $a\in \k[s]^{n}$,  a subset $%
u\subset \k[s]^{n}$  of polynomial vectors is called a \emph{$\mu $-basis of $a$}, or, equivalently, a \emph{$\mu$-basis of $\syz(a)$}, if the following
three properties hold:
\begin{enumerate}
\item $u\ $has exactly $n-1\ $elements;

\item $LV(u_{1}),\dots ,LV(u_{n-1})$ are independent over $\k$;

\item $u$ is a basis of $\syz(a)$, the syzygy module  of $a$.
\end{enumerate}
\end{definition}
}
%%%

As we show in  Lemma~\ref{lem-basis} below, the $\k$-linear independence of leading vectors of any set of polynomial vectors immediately implies the $\k[s]$-linear independence of  the polynomial vectors themselves. Therefore, a set $u$  satisfying Statement~\ref{pr-LV} of Proposition~\ref{prop-equiv} is a basis of $\syz(a)$.  Thus, the apparently stronger Definition~\ref{def-mubasis} is, in fact, equivalent to Statement~\ref{pr-LV} of Proposition~\ref{prop-equiv}.

%\noindent The term \emph{$\mu$-basis of $a$}, introduced in the above definition, can be thought as  an abbreviation  of  the term \emph{$\mu$-basis of $\syz(a)$}. We will use both terms interchangeably. 

In the next two sections, through a series of lemmas culminating in Theorem~\ref{thm-mubasis}, we give a self-contained constructive proof of the existence of a $\mu$-basis. This, in turn, leads to an algorithm, presented in Section~\ref{sect-alg},  for solving  the following problem: 
\vskip2mm \noindent\textbf{Problem:} 

\begin{description}
[leftmargin=5em,style=nextline,itemsep=-.02em]

\item[$Input:$] $a\neq 0 \in\k[s]^{n}$, row vector, where $n>1$ and $\k$
is a computable field.\footnote{ 
A
field is \emph{computable} if there are algorithms for carrying out the
arithmetic ($+,-,\times,/$) operations among the field elements.}

\item[$Output:$] $M\in\k[s]^{n\times\left( n-1\right)}$, such that
the columns of $M$ form a $\mu$-basis of $a$.
\end{description}

\begin{example}[Running example]
\label{ex-prob}
We will be using the following  simple example throughout the paper to illustrate the theoretical ideas/findings    and the resulting algorithm. 

\begin{description}
[leftmargin=5em,style=nextline,itemsep=-.02em]

\item[$Input$] $a=\left[ 
\begin{array}{ccc}
1+s^{2}+s^{4} & 1+s^{3}+s^{4} & 1+s^{4}%
\end{array}
\right] \in \mathbb{Q}[s]^3$

\item[$Output$] $M=\left[ 
\begin{array}{cc}
-s & 1-2s-2s^{2}-s^{3} \\ 
1 & 2+2s+s^{2}+s^{3} \\ 
-1+s & -3%
\end{array}%
\right] $
\end{description}
\end{example}
%%%
In contrast to the algorithm developed by  Song and Goldman in \cite{song-goldman-2009},  the algorithm presented  in this paper produces  a $\mu$-basis even when the input vector $a$ has a non-trivial     greatest common divisor (see Section~\ref{sect-discussion} for more details).

 It is worthwhile emphasizing that not every basis of $\syz(a)$ is a $\mu$-basis. Indeed, let $u_1$ and $u_2$ be the columns of matrix $M$ in Example~\ref{ex-prob}. Then $u_1+u_2$ and $u_2$ is a basis of $\syz(a)$, but not a $\mu$-basis, because $LV(u_1+u_2)=LV(u_2)$.  A $\mu$-basis is not canonical: for instance,   $u_1$ and $u_1+u_2$ will provide another $\mu$-basis for  $\syz(a)$  in Example~\ref{ex-prob}.  However, Statement~\ref{pr-min} of Proposition~\ref{prop-equiv} implies that the degrees of the members of a $\mu$-basis are canonical.   In  \cite{cox-sederberg-chen-1998}, these degrees were denoted by $\mu_1,\dots, \mu_{n-1}$ and the term ``$\mu$-basis'' was coined.  
%%%
 {A more in-depth comparison with previous works on $\mu$-bases and discussion of some related problems can be found in Section~\ref{sect-discussion}.}
%We conclude this section with a more in-depth discussion and comparison with previous works on $\mu$-basis and related problems. A reader who is inclined  to get straight to the core of the paper may safely proceed to the next section. 
%%
%%%%%%%%%%

%%%%%
\section{Syzygies of bounded degree.}

\label{sect-syzd}

From now on, let $\left\langle \square \right\rangle _{\k[s]}$ stand
for the $\k[s]$-module generated by $\square$. It is known that $\syz(a)$ is
generated by polynomial vectors of degree at most $d=\deg(a)$. To keep our
presentation self-contained, we provide a proof of this fact (adapted from
Lemma 2 of \cite{song-goldman-2009}).

\begin{lemma}
\label{lem-d} Let $a\in\k[s]^n$ be of degree $d$. Then $\syz%
(a)$ is generated by polynomial vectors of degree at most $d$.
\end{lemma}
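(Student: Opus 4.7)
The plan is to use induction on the degree $e := \deg(h)$ of a syzygy $h \in \syz(a)$, showing that every such $h$ lies in the $\k[s]$-submodule generated by $\syz_d(a) := \{v \in \syz(a) : \deg(v) \leq d\}$. The base case $e \leq d$ is immediate. For the inductive step, assume $e > d$ and set $c := LV(h) \in \k^n$. Writing $\alpha_i := \mathrm{coeff}(a_i,d)$, the vanishing of the coefficient of $s^{d+e}$ in $a\,h$ yields the scalar identity $\sum_{i=1}^{n} \alpha_i c_i = 0$, so $c$ lies in the $(n-1)$-dimensional $\k$-subspace
\[
W := \{x \in \k^n : \alpha_1 x_1 + \dots + \alpha_n x_n = 0\},
\]
where the vector $\alpha := (\alpha_1,\dots,\alpha_n)$ is nonzero because $\deg(a) = d$.

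The core step is to realize every element of $W$ as the leading vector of an explicit degree-$\leq d$ syzygy. Fix an index $k$ with $\alpha_k \neq 0$. For each $i \neq k$, let $v_{ik} \in \k[s]^n$ be the Koszul-type vector whose $i$-th component is $a_k$, whose $k$-th component is $-a_i$, and whose remaining components are zero. A direct calculation gives $a \cdot v_{ik} = a_i a_k - a_k a_i = 0$ and $\deg(v_{ik}) \leq d$, so $v_{ik} \in \syz_d(a)$. Examining $LV(v_{ik})$ case-by-case (according to whether $\deg(a_i) = d$ or $\deg(a_i) < d$), and deleting row $k$ from the $n \times (n-1)$ matrix whose columns are these leading vectors, produces $\alpha_k \cdot I_{n-1}$; hence the $n-1$ vectors $\{LV(v_{ik})\}_{i \neq k}$ are $\k$-linearly independent elements of $W$, and by dimension count form a basis of $W$.

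Writing $c = \sum_{i \neq k} \lambda_i \, LV(v_{ik})$ for suitable $\lambda_i \in \k$, I would then form
\[
h' := h - s^{e-d} \sum_{i \neq k} \lambda_i \, v_{ik}.
\]
This remains in $\syz(a)$, but by construction its degree-$e$ coefficients cancel, so $\deg(h') < e$. The induction hypothesis applied to $h'$ gives $h' \in \langle \syz_d(a) \rangle_{\k[s]}$, and since each $v_{ik} \in \syz_d(a)$, we conclude $h \in \langle \syz_d(a) \rangle_{\k[s]}$ as well.

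The main conceptual obstacle is matching the elementary Koszul-type syzygies $\{v_{ik}\}_{i \neq k}$ against \emph{all} possible leading vectors of higher-degree syzygies. This works because the single linear constraint $\sum \alpha_i c_i = 0$ extracted from $a\,h = 0$ cuts out exactly the $(n-1)$-dimensional space $W$, whose dimension coincides with the number of Koszul-type syzygies associated to the fixed pivot $k$; the linear-independence check then meets the dimension count precisely, and the reduction step closes the induction.
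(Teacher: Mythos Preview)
Your proof is correct, but it follows a genuinely different route from the paper's. The paper first reduces to the case $\gcd(a)=1$ by passing to $\tilde a=a/\gcd(a)$, then uses a Bezout identity $\sum_i \tilde a_i f_i=1$ to write down, for an \emph{arbitrary} syzygy $h$, the explicit closed-form expression
\[
h=\sum_{1\le i<j\le n}(f_i h_j-f_j h_i)\,u_{ij},
\]
where $u_{ij}$ are the full collection of $\binom{n}{2}$ Koszul syzygies. There is no induction and no reference to leading vectors.

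Your argument instead performs a leading-term reduction: fixing a pivot $k$ with $\alpha_k\neq 0$, you use only the $n-1$ Koszul syzygies $v_{ik}$, observe that their leading vectors span the hyperplane $W$ of admissible top coefficients, and iteratively strip off the top term of $h$. This avoids the $\gcd$ reduction and the Bezout coefficients entirely, and is in spirit the reduction step underlying the Song--Goldman algorithm that the paper later discusses. The trade-off is that the paper's formula is explicit and one-shot, whereas yours is an inductive procedure; on the other hand, your version works directly for $a$ (not $\tilde a$) and uses only $n-1$ generators rather than $\binom{n}{2}$.
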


\begin{proof}
Let $\tilde{a}=a/gcd(a)=[\tilde{a}_{1},\dots ,\tilde{a}_{n}]$. For all $i<j$%
, let 
\begin{equation*}
u_{ij}=[\quad -\tilde{a}_{j}\quad \tilde{a}_{i}\quad ]^{T},
\end{equation*}%
with $-\tilde{a}_{j}$ in $i$-th position, $\tilde{a}_{i}$ in $j$-th position,
and all the other elements equal to zero. We claim that the $u_{ij}$'s are
the desired polynomial vectors. First note that 
\begin{equation*}
\max_{1\leq i<j\leq n}\deg (u_{ij})=\max_{1\leq i\leq n}\tilde{a}_{i}\leq
\deg a=d.
\end{equation*}%
It remains to show that $\syz(a)=\left\langle u_{ij}\,|\,1\leq
i<j\leq n\right\rangle _{\k[s]}.$ Obviously we have 
\begin{equation}
\syz(a)=\syz(\tilde{a})  \label{inc0}
\end{equation}%
Since $u_{ij}$ belongs to $\syz(\tilde{a})$, we have%
\begin{equation}
\syz(\tilde{a})\supset \left\langle u_{ij}\,|\,1\leq i<j\leq
n\right\rangle _{\k[s]}.  \label{inc1}
\end{equation}%
Since $\gcd (\tilde{a})=1$, there exists a polynomial vector $f=[f_{1},\dots
,f_{n}]{^{T}}$ such that 
\begin{equation*}
\tilde{a}_1f_1+\cdots+\tilde{a}_nf_n=1.
\end{equation*}%
For any $h=[h_{1},\dots ,h_{n}]^{T}\in \syz(\tilde{a})$, by definition
$$
\tilde{a}_1h_1+\cdots+\tilde{a}_nh_n = 0.
$$
Therefore, for each $h_i$,
\begin{align*}
h_i &= (\tilde{a}_1f_1+\cdots+\tilde{a}_nf_n)h_i\\
&= \tilde{a}_1f_1h_i+\cdots+\tilde{a}_{i-1}f_{i-1}h_i+\quad\tilde{a}_if_ih_i\quad+\tilde{a}_{i+1}f_{i+1}h_i+\cdots+\tilde{a}_nf_nh_i\\
&= \tilde{a}_1f_1h_i+\cdots+  \tilde{a}_{i-1}f_{i-1}\,h_i-f_i\sum_{k\neq i, k=1}^n\tilde{a}_kh_k \,+  \tilde{a}_{i+1}f_{i+1}h_i+\cdots+\tilde{a}_nf_nh_i\\
&= \tilde{a}_1(f_1h_i - f_ih_1)+\cdots+\tilde{a}_n(f_nh_i-f_ih_n)=\sum_{k\neq i, k=1}^n\,[k,i]\,\tilde{a}_k,
\end{align*}
where we denote $f_k\,h_i-f_ih_k$ by $[k,i]$.
Since $[k,i] = -[i,k]$, it follows that
$$
h = [h_1,\ldots,h_n]^T = \sum_{1\le i<j \le n} [i,j][\quad -\tilde{a}_{j}\quad \tilde{a}_{i}\quad ]^{T} .
$$
That is,
\begin{equation*}
h=\sum_{1\leq i<j\leq n}(f_{i}h_{j}\,-f_{j}h_{i})u_{ij}.
\end{equation*}
Therefore 
\begin{equation}
\syz(\tilde{a})\subset \left\langle u_{ij}\,|\,1\leq i<j\leq
n\right\rangle _{\k[s]}.  \label{inc2}
\end{equation}%
Putting \eqref{inc0}, \eqref{inc1} and \eqref{inc2} together, we have%
\begin{equation*}
\syz(a)=\left\langle u_{ij}\,|\,1\leq i<j\leq n\right\rangle _{%
\k[s]}.
\end{equation*}
\end{proof}

Let $\k[s]_d$ denote the set of polynomials of degree at most $d$,
let $\k[s]_d^n$ denote the set of polynomial vectors of degree at
most $d$, and let 
\begin{equation*}
\syz_d(a)=\{h\in \k[s]^{n}_d\,|\,a\,h=0 \}
\end{equation*}
be the set of all syzygies of degree at most $d$.

It is obvious that $\k[s]_{d}$ is a $(d+1)$-dimensional vector space
over $\k$. 
Therefore, the set $\k[s]_{d}^{n}$ is an $n\,(d+1)$-dimensional
vector space over $\k$. It is straightforward to check that $\mathrm{syz}_{d}(a)$ is a vector subspace of $\k[s]_{d}^{n}$ over $\k
$ and, therefore, is finite-dimensional. 
%\footnote{From the proof of Lemma~\ref{lem-card} below it follows that that $\dim(\syz_d(a))=(n-1)+d(n-2)$, but we will not be using that fact directly.}
The following lemma states that a $\k$-basis of the vector space $\syz_{d}(a)$ generates
the $\k[s]$-module $\syz(a)$. The proof of this lemma follows from Lemma~\ref{lem-d} in a few trivial steps and is included for the sake of completeness.   %%

\begin{lemma}
\label{lem-syzd-ker}Let $a\in \k[s]^{n}$ be of degree $d$ and $%
h_{1},\dots h_{l}$ be a basis of the $\k$-vector space $\syz%
_{d}(a)$. Then $\syz(a)=\left\langle h_{1},\dots ,h_{l}\right\rangle
_{\k[s]}$.
\end{lemma}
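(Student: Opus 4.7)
The plan is to chain two observations: Lemma~\ref{lem-d} already tells us that $\syz(a)$ is generated as a $\k[s]$-module by elements of degree at most $d$, i.e.\ by some subset of $\syz_d(a)$; and any $\k$-basis of $\syz_d(a)$ $\k$-spans all those generators, hence $\k[s]$-spans them \emph{a fortiori}.

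More concretely, I would first establish the easy inclusion $\langle h_1,\dots,h_l\rangle_{\k[s]} \subseteq \syz(a)$: each $h_i$ lies in $\syz_d(a) \subseteq \syz(a)$, and $\syz(a)$ is a $\k[s]$-module, so any $\k[s]$-combination of the $h_i$ remains in $\syz(a)$.

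For the reverse inclusion, I would invoke Lemma~\ref{lem-d} to pick a generating set $G \subseteq \syz(a)$ consisting of vectors of degree at most $d$ (for instance, the $u_{ij}$ from the proof of Lemma~\ref{lem-d}). Every $g \in G$ belongs to $\syz_d(a)$, so by the basis hypothesis $g$ is a $\k$-linear combination of $h_1,\dots,h_l$; in particular it is a $\k[s]$-linear combination. Since every element of $\syz(a)$ is a $\k[s]$-linear combination of elements of $G$, substituting gives that every element of $\syz(a)$ is a $\k[s]$-linear combination of $h_1,\dots,h_l$, proving $\syz(a) \subseteq \langle h_1,\dots,h_l\rangle_{\k[s]}$.

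There is no real obstacle here; the only subtlety is remembering that a $\k$-basis of $\syz_d(a)$ need not itself consist of the $u_{ij}$, but since it $\k$-spans $\syz_d(a)$, it $\k$-spans those generators, which is all that is needed to promote the statement to a $\k[s]$-spanning statement for the full syzygy module.
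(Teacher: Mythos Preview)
Your argument is correct and follows essentially the same route as the paper: invoke Lemma~\ref{lem-d} to obtain generators of $\syz(a)$ lying in $\syz_d(a)$, express each such generator as a $\k$-linear combination of the $h_j$, and substitute. The only difference is that you spell out the (trivial) inclusion $\langle h_1,\dots,h_l\rangle_{\k[s]}\subseteq\syz(a)$, which the paper leaves implicit.
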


\begin{proof}
From Lemma~\ref{lem-d}, it follows that there exist $u_{1},\dots ,u_{r}\in 
\syz_{d}(a)$ that generate the $\k[s]$-module $\syz%
(a)$. Therefore, for any $f\in \syz(a)$, there exist $g_{1},\dots
,g_{r}\in \k[s]$, such that 
\begin{equation}
f=\sum_{i=1}^{r}g_{i}\,u_{i}.  \label{eq-fv}
\end{equation}%
Since $h_{1},\dots h_{l}$ is a basis of the $\k$-vector space $%
\syz_{d}(a)$, there exist $\alpha _{ij}\in \k$ such that 
\begin{equation}
u_{i}=\sum_{j=1}^{l}\alpha _{ij}\,h_{j}.  \label{eq-vh}
\end{equation}%
Combining \eqref{eq-fv} and \eqref{eq-vh} we get: 
\begin{equation*}
f=\sum_{i=1}^{r}g_{i}\,\sum_{j=1}^{l}\alpha
_{ij}\,h_{j}=\sum_{j=1}^{l}\left( \sum_{i=1}^{r}\alpha _{ij}g_{j}\right)
\,h_{j}.
\end{equation*}
\end{proof}

The next step is to show  that the vector space $\syz_d(a)$ is isomorphic to the kernel of a  linear map $A\colon \k^{n(d+1)}\to \k^{2d+1}$ defined as follows: for $a=\displaystyle{\sum_{0\leq j\leq d}c_{j}s^{j}} \in \k_d^n[s]$, where $c_j=[c_{1j},\dots, c_{nj}]\in \k^n$ are \emph{row} vectors,  define
\begin{equation}\label{eqA}
A=
\left[
\begin{array}
[c]{ccc}%
c_{0} &  & \\
\vdots & \ddots & \\
c_d & \vdots & c_{0}\\
& \ddots & \vdots\\
&  & c_{d}%
\end{array}
\right]  \in\k^{(2d+1)\times n(d+1)},
\end{equation}
with the blank spaces filled by zeros.

For this purpose, we define an explicit isomorphism between vector spaces $\k[s]_{t}^{m}$  and    $\k^{m(t+1)}$, where $t$ and $m$ are arbitrary natural numbers.
Any  polynomial $m$-vector $h$ of degree at most $t$ can be written as
$h=w_0+s w_1+\dots+s^{t} w_{t}$  where $w_i=[w_{1i},\dots, w_{mi}]^T\in \k^{m}$.  It is clear that the map
$$\sharp^{m}_{t}\colon \k[s]_{t}^{m}\to \k^{m(t+1)}$$
\begin{equation}\label{iso1} h\to h^{\sharp^m_t}=
\left[
\begin{array}{c}
  w_0   \\
\vdots   \\
w_{t}   
\end{array}
\right] \end{equation}  
is linear.
It is easy to check that the  inverse of this map
$$\flat^{m}_{t}\colon \k^{m(t+1)}   \to  \k[s]_{t}^{m}$$
 is given by a linear map:
\begin{equation}\label{iso2} v\to v^{\flat^m_t} =S^{m}_{t}\,  v        \end{equation}
 where   $$S^{m}_{t}=\left[
\begin{array}
[c]{ccc}%
I_{m} & sI_{m} & \cdots s^{t}I_{m}%
\end{array}
\right] \in \k[s]^{m\times m(t+1)} .
$$
 Here $I_m$ denotes the $m\times m$ identity matrix. {For the sake of notational simplicity, we will often write  
$\sharp$, $\flat$ and $S$ instead of $\sharp^m_t$, $\flat^m_t$ and~$S^m_t$ 
when the values of $m$ and $t$ are clear from the context.}
  
\begin{example}\label{ex-sharp-flat}\rm For  
   $$h=\left[\begin{array}
[c]{c}%
 1-2s-2s^{2}-s^{3}\\
 2+2s+s^{2}+s^{3}\\
 -3
\end{array}
\right]= \left[\begin{array}
[c]{r}%
 1\\
 2\\
 -3
\end{array}
\right]+s\,\left[\begin{array}
[c]{r}%
 -2\\
 2\\
 0
\end{array}
\right]
+
s^2\,\left[\begin{array}
[c]{r}%
 -2\\
 1\\
 0
\end{array}
\right]+
s^3\,\left[\begin{array}
[c]{r}%
 -1\\
1\\
0
\end{array}
\right],$$
we have 
$$h^\sharp=[1,\,2,\,-3,\,-2,\,2,\,0,\,-2,\,1,\,0,\,-1,\,1,\,0]^T.$$
Note that
$$h=(h^\sharp)^\flat=S\, h^\sharp=\left[\begin{array}{cccc} I_3 & sI_3 & s^2I_3 & s^3I_3\end{array}\right] h^\sharp.$$
  \end{example}
With respect to the isomorphisms $\sharp$ and $\flat$,  the $\k$-linear map $a\colon \k[s]_d^n\to\k[s]_{2d}$ corresponds to the $\k$ linear map $A\colon \k^{n(d+1)}\to \k^{2d+1}$ in the following sense: 
  %%%
  \begin{lemma}\label{lem-aA}
 Let $a=\displaystyle{\sum_{0\leq j\leq d}c_{j}s^{j}} \in \k_d^n[s]$ and $A \in \mathbb{K}^{(2d+1)\times n(d+1)}$ defined as in \eqref{eqA}.  Then for any $v\in \k^{n(d+1)}$:
\begin{equation}\label{eq-aA} a  v^\flat=(A v)^\flat .\end{equation}

\end{lemma}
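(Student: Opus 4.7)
The plan is to verify the identity by direct coefficient comparison on both sides, using the explicit block structure of $A$ together with the definitions of $\sharp$ and $\flat$.

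First I would write $v \in \k^{n(d+1)}$ in block form as $v = [w_0^T,\,w_1^T,\,\dots,\,w_d^T]^T$ with each $w_i \in \k^n$, so that by the definition \eqref{iso2} we have $v^\flat = \sum_{i=0}^d w_i\, s^i$. Then I would expand the product of the row vector $a = \sum_{j=0}^d c_j s^j$ and the column vector $v^\flat$ as a polynomial in $s$:
\begin{equation*}
a\, v^\flat \;=\; \Bigl(\sum_{j=0}^d c_j s^j\Bigr)\Bigl(\sum_{i=0}^d w_i s^i\Bigr) \;=\; \sum_{k=0}^{2d}\Bigl(\sum_{\substack{i+j=k\\ 0\le i,j\le d}} c_j w_i\Bigr) s^k,
\end{equation*}
so that the coefficient of $s^k$ in $a\, v^\flat$ is the scalar $\sum_{i+j=k} c_j w_i$.

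Next, I would read off the block-Toeplitz structure of $A$ from \eqref{eqA}: the $k$-th row (for $k=0,\dots,2d$) restricted to the $i$-th block of $n$ columns (for $i=0,\dots,d$) equals $c_{k-i}$ when $0\le k-i\le d$ and is zero otherwise. Multiplying by the block-decomposed $v$, the $k$-th entry of $Av \in \k^{2d+1}$ becomes
\begin{equation*}
(Av)_k \;=\; \sum_{i=0}^d [\text{block } i \text{ of row } k]\, w_i \;=\; \sum_{\substack{0\le i\le d\\ 0\le k-i\le d}} c_{k-i}\, w_i \;=\; \sum_{i+j=k} c_j\, w_i.
\end{equation*}
Applying $\flat = \flat_{2d}^1$ to $Av$ then gives $(Av)^\flat = \sum_{k=0}^{2d} (Av)_k\, s^k$, which coefficient-by-coefficient matches the expression for $a\, v^\flat$ obtained above, establishing~\eqref{eq-aA}.

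There is really no serious obstacle here: the statement is a restatement of the classical fact that polynomial multiplication corresponds to multiplication by a Toeplitz (convolution) matrix on coefficient vectors. The only thing to be careful about is index bookkeeping, namely matching the shift in each block column of $A$ with the correct power of $s$ in $v^\flat$, and verifying that the ranges $0\le i,j \le d$ match the non-zero pattern of $A$.
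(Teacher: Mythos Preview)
Your proof is correct and follows essentially the same approach as the paper's own proof: both decompose $v$ into blocks $w_0,\dots,w_d$, expand $a\,v^\flat$ as a Cauchy product in $s$, compute the $k$-th entry of $Av$ from the block-Toeplitz structure of $A$, and match coefficients. The only cosmetic differences are that the paper uses $1$-based indexing for the entries of $Av$ and handles the index constraints by extending $c_j=0$ outside $\{0,\dots,d\}$, whereas you restrict the summation range explicitly.
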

\begin{proof}
A vector   $v\in \k^{n(d+1)}$ can be split into $(d+1)$ blocks
$$\left[
\begin{array}{c}
  w_0   \\
\vdots   \\
w_d   
\end{array}
\right],$$ 
 where $w_i\in \k^n$ are column vectors.  For $j<0$ and $j>d$, let us define $c_j=0\in \k^n$.
 Then $Av$ is a $(2d+1)$-vector with $(k+1)$-th entry  $$(Av)_{k+1}=c_k\,w_{0}+c_{k-1}w_{1}+\cdots+c_{k-d}w_{d}=\displaystyle{\sum_{0\leq i\leq d}c_{k-i}w_{i}},$$  where $k=0,\dots, 2d$. Then

\begin{eqnarray*}
a v^\flat&=&a\,S_d^n\, v=\left(  \sum_{0\leq j\leq d}c_{j}s^{j}\right)  \,\left(  \sum_{0\leq
i\leq d}w_{i}\,s^{i}\right)  =\sum_{0\leq i,j\leq d}c_{j}w_{i}s^{i+j}\\
&=&\sum_{0\leq k\leq2d}s^k \,\left(  \sum_{0\leq i\leq d}c_{k-i}w_{i}\right) = \sum_{0\leq k\leq2d}s^k\, (Av)_{k+1} =S_{2d}^1\, (Av)=(Av)^\flat.
\end{eqnarray*}

\end{proof}
\begin{example}\label{ex-A} \rm For  the row vector $a$ in  the running example  (Example~\ref{ex-prob}), we have $n=3$, $d=4$,
 $$c_{0}=[1,1,1],\, c_{1}=[0,0,0],\,c_{2}=[1,0,0],\,c_{3}=[0,1,0],\,c_{4}=[1,1,1]$$
and
$$A=\left[
\begin{array}
[c]{rrrrrrrrrrrrrrr}%
1 & 1 & 1 &    &    &     &   &    &    &    &    &    &    &   &  \\
0 & 0 & 0 & 1 & 1 & 1 &    &    &    &    &    &    &    &   &  \\
1 & 0 & 0 & 0 & 0 & 0 & 1 & 1 & 1 &    &    &    &    &   &  \\
0 & 1 & 0 & 1 & 0 & 0 & 0 & 0 & 0 & 1 & 1 & 1 &    &   &  \\
1 & 1 & 1 & 0 & 1 & 0 & 1 & 0 & 0 & 0 & 0 & 0 & 1 & 1 & 1\\
 &      &   & 1 & 1 & 1 & 0 & 1 & 0 & 1 & 0 & 0 & 0 & 0 & 0\\
 &      &   &    &    &    & 1 & 1 & 1 & 0 & 1 & 0 & 1 & 0 & 0\\
 &      &   &    &    &    &    &    &    & 1 & 1 & 1 & 0 & 1 & 0\\
 &      &   &    &    &    &     &    &    &  &     &    & 1 & 1 & 1
\end{array}
\right].  $$
Let $v=[1,2,3,4,5,6,7,8,9,10,11,12,13,14,15]^T$.
Then 
$$Av= [6,15,25,39,60,33,48,47,42]^T$$
and so
$$
(Av)^\flat = S_{2d}^1(Av) = S_{8}^1(Av) = 6 + 15s + 25s^2 + 39s^3 + 60s^4 + 33s^5 + 48s^6 + 47s^7 + 42s^8.
$$
On the other hand, since 
$$v^\flat=S_d^nv = S_4^3v = \left[\begin{array}{c} 1+4s+7s^2+10s^3+13s^4\\ 2+5s+8s^2+11s^3+14s^4\\ 3+6s+9s^2+12s^3+15s^4\end{array}\right],$$ we have
\begin{align*}
a v^\flat &= \left[
\begin{array}
[c]{ccc}%
1+s^{2}+s^{4} & 1+s^{3}+s^{4} & 1+s^{4}%
\end{array}
\right]  \left[\begin{array}{c} 1+4s+7s^2+10s^3+13s^4\\ 2+5s+8s^2+11s^3+14s^4\\ 3+6s+9s^2+12s^3+15s^4\end{array}\right]\\ 
&= 42s^8+47s^7+48s^6+33s^5+60s^4+39s^3+25s^2+15s+6.
\end{align*}
We observe that
$$a v^\flat=(Av)^\flat.$$

\end{example}

%%%
\begin{lemma} \label{lem-isom}    $v\in \mathrm{ker}(A)$ if and only if  $v^\flat\in \syz_{d}(a)$. 
\end{lemma}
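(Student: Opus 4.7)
The plan is to deduce this equivalence as an almost immediate consequence of Lemma~\ref{lem-aA}, which already establishes the intertwining identity $a\,v^\flat = (Av)^\flat$. The only additional ingredient needed is the fact, already noted when introducing $\sharp^m_t$ and $\flat^m_t$, that these maps are mutually inverse $\k$-linear isomorphisms; in particular, $\flat$ is injective, so applying $\flat$ to a vector yields $0$ if and only if that vector is $0$.

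Concretely, I would proceed in one short chain of equivalences. Starting from $v \in \ker(A)$, this is by definition $Av = 0$ in $\k^{2d+1}$. Applying the isomorphism $\flat^1_{2d}$, which sends $0$ to $0 \in \k[s]_{2d}$ and is injective, this is equivalent to $(Av)^{\flat^1_{2d}} = 0$. By Lemma~\ref{lem-aA}, $(Av)^{\flat^1_{2d}} = a\,v^{\flat^n_d}$, so the condition becomes $a\,v^{\flat^n_d} = 0$. Since $v^{\flat^n_d} \in \k[s]_d^n$ by construction of $\flat^n_d$, this last condition is precisely the statement that $v^\flat \in \syz_d(a)$.

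There is no real obstacle here, as all the work has been front-loaded into Lemma~\ref{lem-aA} and into the definitions of $\sharp$ and $\flat$. The only small point worth stating explicitly in the write-up is that $v^\flat$ automatically lies in $\k[s]_d^n$ (not merely in $\k[s]^n$), which is why we land in $\syz_d(a)$ rather than in the larger module $\syz(a)$. This lemma sets up the bridge used in the next section: the problem of finding a $\k$-basis of $\syz_d(a)$ (which by Lemma~\ref{lem-syzd-ker} generates $\syz(a)$ as a $\k[s]$-module) is transported, via $\flat$, to the purely linear-algebraic problem of computing $\ker(A)$.
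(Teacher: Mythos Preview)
Your proposal is correct and follows exactly the same approach as the paper, which simply notes that the lemma ``follows immediately from \eqref{eq-aA}.'' Your write-up is just a more explicit unpacking of that one-line argument, making the injectivity of $\flat$ and the fact that $v^\flat\in\k[s]_d^n$ explicit.
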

\begin{proof} Follows immediately from \eqref{eq-aA}.
\end{proof}
We conclude this section by describing an explicit generating set for the syzygy module. 
\begin{lemma}\label{lem-gen-syz} Let $b_1, \dots b_l$ comprise a basis of
$\mathrm{ker}(A)$, then 
$$\syz(a)=\left<b_1^\flat, \dots,  b_l^\flat\right>_{\k[s]}.$$
\end{lemma}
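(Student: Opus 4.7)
The plan is to combine the three previously established results in a direct chain: Lemma~\ref{lem-isom} (which identifies $\ker(A)$ with $\syz_d(a)$ via $\flat$), the fact that $\flat$ is a $\k$-linear isomorphism, and Lemma~\ref{lem-syzd-ker} (which says any $\k$-basis of $\syz_d(a)$ generates $\syz(a)$ as a $\k[s]$-module).

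First I would recall from \eqref{iso2} that $\flat^n_d\colon \k^{n(d+1)}\to \k[s]^n_d$ is a $\k$-linear isomorphism. Therefore, if $b_1,\dots,b_l$ is a $\k$-basis of the subspace $\ker(A)\subset \k^{n(d+1)}$, then the images $b_1^\flat,\dots,b_l^\flat$ form a $\k$-basis of the image subspace $\flat(\ker(A))\subset \k[s]^n_d$.

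Next, Lemma~\ref{lem-isom} states that $v\in \ker(A)$ iff $v^\flat \in \syz_d(a)$. Together with injectivity of $\flat$, this gives $\flat(\ker(A)) = \syz_d(a)$ as $\k$-vector subspaces of $\k[s]_d^n$. Consequently $b_1^\flat,\dots,b_l^\flat$ is a $\k$-basis of $\syz_d(a)$.

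Finally, applying Lemma~\ref{lem-syzd-ker} to this basis yields $\syz(a)=\langle b_1^\flat,\dots,b_l^\flat\rangle_{\k[s]}$, which is exactly the conclusion. There is essentially no obstacle here; the lemma is a bookkeeping consequence of the three prior facts, and the only thing worth being careful about is making explicit that the $\k$-linear isomorphism $\flat$ carries $\k$-bases of subspaces to $\k$-bases of their images, so that ``basis of $\ker(A)$'' translates to ``$\k$-basis of $\syz_d(a)$'' before invoking Lemma~\ref{lem-syzd-ker}.
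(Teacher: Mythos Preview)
Your proof is correct and follows essentially the same route as the paper: use Lemma~\ref{lem-isom} together with the fact that $\flat$ is a $\k$-linear isomorphism to conclude that $b_1^\flat,\dots,b_l^\flat$ is a $\k$-basis of $\syz_d(a)$, and then invoke Lemma~\ref{lem-syzd-ker}. The paper's own argument is the same, only stated more tersely.
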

\begin{proof} Lemma~\ref{lem-isom} shows that the isomorphism \eqref{iso2} between vector spaces 
$\k^{n(d+1)}$ and $\k(s)_d^n$  induces an isomorphism between their respective subspaces 
$\mathrm{ker}(A)$ and $\syz_d(a)$. Therefore,  $b_1^\flat, \dots,  b_l^\flat$ is a basis of  $\syz_d(a)$.  The conclusion then follows from Lemma~\ref{lem-syzd-ker}.
\end{proof}
%%%
%Section
\section{``Row-echelon'' generators and $\mu$-bases.}\label{sect-resyz}
%%%%
In the previous section,  we proved that any basis of $\mathrm{ker}(A)$  gives rise to a generating set of $\syz(a)$.  In this section, we show that  a particular basis of $\mathrm{ker}(A)$, which can be  ``read off''  the reduced  row-echelon  form of $A$, contains $n-1$ vectors that give rise to a $\mu$-basis of $\syz(a)$.  
In this and the following sections,  $\mathrm{quo}(i,j)$ denotes the quotient  and $\mathrm{rem}(i,j)$ denotes the remainder generated by  dividing of an  integer $i$ by an integer $j$.
%%%%
\vskip2mm
\noindent We start with the following important definition:
\begin{definition} A  column of any matrix $N$ is called \emph{pivotal} if it is either the first column and  is non-zero  or it is linearly independent of all previous columns.  The rest of the columns of $N$ are called \emph{non-pivotal}. The index of  a pivotal (non-pivotal)  column is called  a \emph{pivotal (non-pivotal)} index.
\end{definition}
From this definition, using induction,  it follows that every non-pivotal column can be written as a linear combination of the preceding \emph{pivotal  columns}.

We denote the set of pivotal indices of $A$ as $p$ and the set of its non-pivotal indices as $q$.
In the following two lemmas, we show how the specific structure of the matrix $A$ is reflected in the structure of the set of non-pivotal indices $q$.

\begin{lemma}[periodicity]
\label{periodic}If $j\in q$ then $j+kn\in q$ for
$ 0\leq k\leq\left\lfloor \frac{n(d+1)-j}{n}\right\rfloor$. Moreover,
\begin{equation}\label{eq-periodic} A_{*j}=\sum_{ r<j} \alpha_r\, A_{*r}\quad \Longrightarrow\quad  A_{*j+kn}=\sum_{ r<j} \alpha_r\, A_{*r+kn},\end{equation}
where $A_{*j}$ denotes the $j$-th column of $A$.  
\end{lemma}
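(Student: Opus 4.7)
The plan is to exploit the block-Toeplitz structure of $A$ visible in equation~\eqref{eqA}: each super-block of $n$ consecutive columns is the previous super-block shifted down by exactly one row. To make this precise I would write a column index in the form $j = i'\,n + k$ with $1\le k \le n$ and $0 \le i' \le d$; then column $A_{*j}$ has entry $(c_l)_k$ in row $i'+l+1$ for $0 \le l \le d$ and zeros elsewhere. Comparing with the analogous description of column $j+n=(i'+1)n+k$ one reads off
$$A_{i+1,\,j+n} = A_{i,\,j}\ \text{ for }\ 1 \le i \le 2d, \qquad A_{1,\,j+n}=0,$$
so $A_{*j+n}$ is precisely the downward shift of $A_{*j}$ by one row (with the zero in the bottom row harmlessly falling off, and a zero inserted at the top). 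This is the only content-bearing step of the proof.

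After that the rest is formal. Downward shift of column vectors is a $\k$-linear operation, hence commutes with taking linear combinations. Applying it row-by-row to the hypothesis $A_{*j}=\sum_{r<j}\alpha_r A_{*r}$ gives
$$A_{*j+n} = \sum_{r<j}\alpha_r\, A_{*r+n},$$
where the top-row identity is the trivial $0=0$ because every column index exceeding $n$ has vanishing first entry. A straightforward induction on $k$ then yields $A_{*j+kn} = \sum_{r<j}\alpha_r\,A_{*r+kn}$ for every admissible $k$. Since $r<j$ forces $r+kn < j+kn$, the right-hand side displays $A_{*j+kn}$ as a linear combination of strictly earlier columns, putting $j+kn$ in $q$.

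What remains is indexing and edge cases. The stated bound $k \le \lfloor (n(d+1)-j)/n\rfloor$ is exactly the condition $j+kn \le n(d+1)$ that keeps the shifted column within $A$; since every $r<j$ satisfies $r+kn < j+kn \le n(d+1)$, all columns appearing in the sum also exist. The one boundary subtlety is when $j$ is the first column with $A_{*j}=0$ (so the sum is empty): the shift argument still gives $A_{*1+kn}=0$, which for $k\ge 1$ is a trivial linear combination of preceding columns with all coefficients zero, so $1+kn\in q$. I do not anticipate any real obstacle: the lemma is essentially a direct translation of the shift symmetry built into $A$, and the only thing requiring care is checking that the bound on $k$ and the bound $r<j$ are mutually consistent with the matrix dimensions.
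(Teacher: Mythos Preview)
Your proposal is correct and follows essentially the same approach as the paper: both exploit the shift identity $A_{i,r+n}=A_{i-1,r}$ built into the block-Toeplitz structure of $A$, then use linearity to transport the dependence relation. The only stylistic difference is that the paper embeds $A$ into a matrix indexed by all integers so that the identity $A_{i,r+kn}=A_{i-k,r}$ holds without boundary qualifications, whereas you perform a single downward shift, check explicitly that the top and bottom rows cause no trouble, and then induct on $k$; both handle the $j=1$ edge case separately.
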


\begin{proof}
To prove the statement, we need to examine the structure of the $(2d+1)\times n(d+1)$ matrix $A$:
\begin{equation}
\label{At-pic}\left[
\begin{array}
[c]{cccccccccc}%
c_{01} & \cdots & c_{0n} &  &  &  &  &  &  & \\
\vdots & \cdots & \vdots & c_{01} & \cdots & c_{0n} &  &  &  & \\
\vdots & \cdots & \vdots & \vdots & \cdots & \vdots & \ddots &  &  & \\
c_{d1} & \cdots & c_{dn} & \vdots & \cdots & \vdots & \ddots & c_{01} & \cdots
& c_{0n}\\
&  &  & c_{d1} & \cdots & c_{dn} & \ddots & \vdots & \cdots & \vdots \\
&  &  &  &  & & \ddots & \vdots & \cdots & \vdots\\
&  &  &  &  &  & & c_{d1} & \cdots & c_{dn}
\end{array}
\right] .
\end{equation}
The $j$-th column of $A$
has the first $\mathrm{quo}(j-1,n)$ and the last $(d-\mathrm{quo}(j-1,n))$
entries zero. For $1 \leq j\leq nd$ the $(n+j)$-th column is obtained by
shifting all entries of the $j$-th column down by 1 and then putting an extra
zero on the top. We consider two cases:

\begin{enumerate}
\item Integer $j=1$ is in $q$ if and only if the first column of $A$ is zero.
From the structure of $A$ it follows that any column indexed by $1+kn$ is zero
and therefore, $(1+kn)\in q$ for $\left\lfloor \frac{n(d+1)-1}{n}\right\rfloor
=d \geq k\geq0$.

\item 

Let us embed  $A$ in an infinite
matrix indexed by integers. By inspection of the structure of $A$ given by
\eqref{At-pic}, we see immediately
\begin{equation}
A_{i,r+kn}=A_{i-k,r}\label{structure_of_A}.
\end{equation}
Then, for a non pivotal  index $j>1$ and $ 0\leq k\leq\left\lfloor \frac{n(d+1)-j}{n}\right\rfloor$
we have:
\begin{align*}
&  \, A_{*j}=\sum_{ r<j} \alpha_r\, A_{*r}\\
\Longleftrightarrow &  \underset{i\in\mathbb{Z}}{\forall
}\ A_{i,j}\text{ }=\sum_{r=1}^{j-1}\alpha_{r}A_{i,r}\\
\Longleftrightarrow &   \underset{i\in\mathbb{Z}}{\forall
}\ \ A_{i-k,j}\text{ }=\sum_{r=1}^{j-1}\alpha_{r}A_{i-k,r}
\;\;\;\;\;\;\;\;\;\; \text{(by reindexing the row)}\\
\Longleftrightarrow &   \underset{i\in\mathbb{Z}}{\forall
}\ \ A_{i,j+kn}\text{ }=\sum_{r=1}^{j-1}\alpha_{r}A_{i,r+kn} \;\;\;\;\;\;\;
\text{(from \eqref{structure_of_A}) }\\
\Longrightarrow & \quad   A_{*j+kn}=\sum_{ r<j} \alpha_r\, A_{*r+kn},
\end{align*}
Therefore $(j+kn)\in q$ for $\left\lfloor \frac{n(d+1)-j}{n}\right\rfloor \geq
k\geq0$ and equation \eqref{eq-periodic} holds.
\end{enumerate}
\end{proof}
 \begin{definition}  \rm Let $q$ be the set of non-pivotal indices.  Let $q/(n)$ denote the set of equivalence classes of $q
\text{ modulo } n$. Then the set   $\tilde q=\{ \min \varrho\,| \varrho\in q/(n)\}$ will be called  the set of \emph{basic non-pivotal indices}.  
 \end{definition}
 %%%

 %%%%
 \begin{example}\rm \label{ex-period} For the matrix  $ A$ in  Example~\ref{ex-A}, we have $n=3$ and  $q=\{ 6,9,11,12,14,15\}$. 
 Then $q/(n)=\big\{ \{6,9,12, 15\},\,\{11,14\} \}\big\}$ and $\tilde q=\{6,11\}$.  
 \end{example}
%%%
\begin{lemma}\label{lem-card} There are exactly  $n-1$ basic non-pivotal indices: $|\tilde q|=n-1$.
\end{lemma}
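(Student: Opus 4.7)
My plan is to prove the two inequalities $|\tilde{q}| \ge n-1$ and $|\tilde{q}| \le n-1$ separately.

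For the lower bound, I count pivotal columns by residue class modulo $n$. Each residue class $r \in \{1,\dots,n\}$ contains exactly $d+1$ column indices of $A$, and by the definition of $\tilde{q}$, a residue class fails to be represented in $q/(n)$ precisely when \emph{none} of its columns is non-pivotal, i.e., when all $d+1$ of them are pivotal. Hence
\[
\rank(A) \;\ge\; (n - |\tilde{q}|)(d+1).
\]
Since $A$ has only $2d+1$ rows, $\rank(A) \le 2d+1$, so $(n - |\tilde{q}|)(d+1) \le 2d+1 < 2(d+1)$. This forces $n - |\tilde{q}| \le 1$, giving $|\tilde{q}| \ge n-1$.

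For the upper bound, I exhibit one residue class whose columns are all pivotal. Let $i^{*} = \min\{i : \deg a_i = d\}$; this is well-defined because $\deg a = d$. I claim that for each $j \in \{0,1,\dots,d\}$, column $nj+i^{*}$ of $A$ is pivotal. This column equals $(s^{j} a_{i^{*}})^{\sharp}$, and since $s^{j} a_{i^{*}}$ has degree exactly $d+j$ with non-zero leading coefficient, its $(d+j+1)$-st entry is non-zero. Any strictly earlier column has index $nj'+i' < nj+i^{*}$, which forces either (i) $j'<j$, giving $\deg(s^{j'}a_{i'}) \le j'+d < j+d$, or (ii) $j'=j$ and $i'<i^{*}$, in which case $\deg a_{i'} < d$ by the minimality of $i^{*}$ and hence $\deg(s^{j'}a_{i'}) < j+d$. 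In either case the $(d+j+1)$-st entry of such an earlier column vanishes, so column $nj+i^{*}$ cannot lie in the span of the earlier columns. Thus the residue class of $i^{*}$ contributes no element to $q/(n)$, yielding $|\tilde{q}| \le n-1$.

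The only bit requiring care is pinning down the column ordering: ``earlier'' means smaller column index, i.e., the lexicographic order that takes block $j$ first and then position $i$ within the block. With that ordering fixed, the two cases above are exhaustive, and combining the two inequalities gives $|\tilde{q}| = n-1$.
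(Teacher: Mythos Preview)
Your proof is correct and follows essentially the same two-step strategy as the paper: bound $|\tilde q|$ from below by a rank/column count, and bound it from above by exhibiting one residue class that is entirely pivotal (namely the class of the first index $i^*$ with $\deg a_{i^*}=d$).

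The only notable difference is that your argument is self-contained, whereas the paper leans on the periodicity lemma (Lemma~\ref{periodic}) at both stages. For the upper bound, the paper shows only that the \emph{last} column $nd+i^*$ is pivotal and then invokes periodicity (in contrapositive form) to conclude all earlier columns in that residue class are pivotal; you instead verify pivotality of each column $nj+i^*$ directly via the $(d{+}j{+}1)$-st entry. For the lower bound, the paper phrases the same count as $|q|\le |\tilde q|(d+1)$ using periodicity, while you observe equivalently that each residue class absent from $q/(n)$ contributes $d+1$ pivotal columns. Your route is slightly more elementary; the paper's route reuses an already-proved structural lemma.
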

\begin{proof} We  prove this lemma by showing that   $|\tilde q|<n$ and $|\tilde q|>n-2$.\begin{enumerate}
\item Since there are at most $n$ equivalence classes in $q$ modulo $n$, it follows  from the definition of $\tilde q$ that $|\tilde q|\leq n$. Moreover,  the $(2d+1)$-th row of the last block of $n$-columns of $A$  is given by the row vector $c_d=(c_{1d}, \dots, c_{nd})=LV(a)$, which is non-zero. Thus, there exists $r\in \{1,\dots,n\}$, such that $c_{rd} \not= 0$. Suppose $r$ is minimal such that $c_{rd} \not= 0$. Then the $(nd+r)$-th column of $A$ is independent from the first $nd+r-1$ columns (since each of these columns has a zero in the $(2d+1)$-th position). Hence, there exists $r\in \{1,\dots,n\}$ such that $nd+r$ is a pivotal index. From the periodicity Lemma~\ref{periodic}, it follows that   for every  $k=0, \dots d$, index $r+kn$ is pivotal and therefore no integer  from the class $r\, \text{modulo} \,n$ belongs to $\tilde q$.   Thus $|\tilde q|< n.$
\item Assume $|\tilde q|\leq n-2$. From the periodicity Lemma~\ref{periodic}, it follows that  the set of non-pivotal  indices is the union of the sets $\{j+kn |j\in \tilde q,\, 0\leq k\leq l_j\}$, where $l_j\leq d$ is some integer. Therefore 
$$\label{ineq}| q|\leq |\tilde q|\, (d+1)\leq (n-2)(d+1)= nd +n-2d-2.$$

On the other hand, $|q|= n(d+1)-|p|$.  It is well-known (and easy to check) that $|p|=\rank (A)$. Since $\rank (A)$ cannot exceed the number of rows of $A$, $|p|\leq2d+1$. Therefore   
$$|q|\geq  n (d+1) -(2d+1)= nd+n-2d -1.$$ Contradiction. Hence $|\tilde q|>n-2$.
\end{enumerate}
\vskip3mm

\end{proof}
From the matrix $A$ we will now construct a square $n\,(d+1)\times n(d+1)$ matrix $V$ in the following way.   For $i\in p$, the $i$-th column of $V$ has   $1$ in the $i$-th row and  $0$'s in all other rows.  For $i\in q$ we define the $i$-th column from the linear relationship 
 \begin{equation}\label{eqAj}A_{*i}=\sum_{\{j\in p\,|\,  j<i\}} \alpha_j  A_{*j}\end{equation}
as follows: for $j\in p$ such that $j<i$ we set $V_{ji}=\alpha_j$. All the remaining elements of the column $ V_{*i}$ are zero.  For simplicity we will denote the $i$-th column of $V$ as $v_i$.
We note  two important properties of $V$:
\begin{enumerate}
\item Matrix $V$ has the same  linear relationships  among its columns as $A$.
\item Vectors $\{b_i=e_i -v_i \,|\, i\in q\}$, where by $e_i$ we  denote a column vector  that has $1$ in the $i$-th  position and  $0$'s in all others, comprise a basis of $\mathrm{ker}(A)$.
\end{enumerate}
The corresponding syzygies $\{b_i^\flat \,|\, i\in q\}$ will be called \emph{row-echelon syzygies}  because the  $\alpha$'s appearing in \eqref{eqAj} can be read off the reduced row-echelon  form $E$ of $A$.   (We remind the reader that the   $(2d+1)\times n(d+1)$ matrix $E$ has the following property:   for all  $i\in q$, the non-zero entries of the $i$-th  column  consist of
${\{\alpha_j\,|\, j\, \in p,\,  j<i\}}$ and $\alpha_j$ is located in the row that corresponds to the place of $j$ in the  \emph{increasingly ordered} list $p$.) The reduced row-echelon form can be computed using Gauss-Jordan elimination or  some other methods.
 \begin{example} \rm \label{ex-resyz} For the matrix $A$ in Example~\ref{ex-A}, we have $n=3$, $d = 4$, and
$$
V = \left[\begin{array}{rrrrrrrrrrrrrrr}
1 & 0 & 0 & 0 & 0 & 0 & 0 & 0 & 0 & 0 & -1 & 0 & 0 & 1 & 1\\
0 & 1 & 0 & 0 & 0 & -1 & 0 & 0 & -1 & 0 & -2 & -1 & 0 & -1 & 1\\
0 & 0 & 1 & 0 & 0 & 1 & 0 & 0 & 1 & 0 & 3 & 1 & 0 & 0 & -2\\
0 & 0 & 0 & 1 & 0 & 1 & 0 & 0 & 1 & 0 & 2 & 1 & 0 & 0 & -1\\
0 & 0 & 0 & 0 & 1 & 0 & 0 & 0 & -1 & 0 & -2 & -1 & 0 & 0 & 1\\
0 & 0 & 0 & 0 & 0 & 0 & 0 & 0 & 0 & 0 & 0 & 0 & 0 & 0 & 0\\
0 & 0 & 0 & 0 & 0 & 0 & 1 & 0 & 1 & 0 & 2 & 1 & 0 & 0 & -1\\
0 & 0 & 0 & 0 & 0 & 0 & 0 & 1 & 0 & 0 & -1 & -1 & 0 & -1 & 0\\
0 & 0 & 0 & 0 & 0 & 0 & 0 & 0 & 0 & 0 & 0 & 0 & 0 & 0 & 0\\
0 & 0 & 0 & 0 & 0 & 0 & 0 & 0 & 0 & 1 & 1 & 1 & 0 & 1 & 0\\
0 & 0 & 0 & 0 & 0 & 0 & 0 & 0 & 0 & 0 & 0 & 0 & 0 & 0 & 0\\
0 & 0 & 0 & 0 & 0 & 0 & 0 & 0 & 0 & 0 & 0 & 0 & 0 & 0 & 0\\
0 & 0 & 0 & 0 & 0 & 0 & 0 & 0 & 0 & 0 & 0 & 0 & 1 & 1 & 1\\
0 & 0 & 0 & 0 & 0 & 0 & 0 & 0 & 0 & 0 & 0 & 0 & 0 & 0 & 0\\
0 & 0 & 0 & 0 & 0 & 0 & 0 & 0 & 0 & 0 & 0 & 0 & 0 & 0 & 0
\end{array}\right].
$$ 
 The non-pivotal indices are $q=\{ 6,9,11,12,14,15\}$. We have
\begin{align*}
b_6 &= e_6 - v_6 = [0,1,-1,-1,0,1,0,0,0,0,0,0,0,0,0]^T\\
b_9 &= e_9 - v_9 = [0,1,-1,-1,1,0,-1,0,1,0,0,0,0,0,0]^T\\
b_{11} & = e_{11} - v_{11} = [1,2,-3,-2,2,0,-2,1,0,-1,1,0,0,0,0]^T\\
b_{12} &= e_{12} - v_{12} = [0,1,-1,-1,1,0,-1,1,0,-1,0,1,0,0,0]^T\\
b_{14} &= e_{14} - v_{14} = [-1,1,0,0,0,0,0,1,0,-1,0,0,-1,1,0]^T\\
b_{15} &= e_{15} - v_{15} = [-1,-1,2,1,-1,0,1,0,0,0,0,0,-1,0,1]^T
\end{align*}
and the corresponding row-echelon syzygies are
\begin{align*}
b_6^\flat &= \left[\begin{array}{c} -s \\ 1\\ -1+s \end{array} \right] && b_9^\flat = \left[\begin{array}{c} -s-s^2 \\ 1+s \\ -1+s^2 \end{array}\right]\\
b_{11}^\flat &= \left[\begin{array}{c} 1-2s-2s^2-s^3 \\ 2+2s+s^2+s^3 \\ -3 \end{array}\right] && b_{12}^\flat = \left[\begin{array}{c} -s-s^2-s^3 \\ 1+s+s^2\\ -1+s^3 \end{array}\right]\\
b_{14}^\flat &= \left[\begin{array}{c} -1-s^3-s^4 \\ 1+s^2+s^4\\ 0 \end{array}\right]&& b_{15}^\flat = \left[\begin{array}{c} -1+s+s^2-s^4 \\ -1-s\\ 2+s^4 \end{array}\right].
\end{align*}
\end{example}

 \noindent The following lemma shows a crucial relationship between the {row-echelon syzygies}. {Note that, in this lemma, we use  $i$ to denote a non-pivotal index and $\iota$ to denote a \emph{basic} non-pivotal index.} 
%%%%
 \begin{lemma} Let $v_r$, $\, r\in\{1,\dots, n(d+1)\}$ denote columns of the matrix $V$. For $i\in q$, let 
 \begin{equation}\label{eq-b}b_i=e_i-v_i\end{equation}  
 Then for  any $\iota\in \tilde q$ and any integer $k$ such that  $ 0\leq k\leq\left\lfloor \frac{n(d+1)-\iota}{n}\right\rfloor$
\begin{equation}\label{eq-bikn}b_{\iota+kn}^\flat =  s^k\,b_{\iota}^\flat +\sum_{\{j\in p\,|\,j<\iota,\, {j+kn}\in q \}}\,\alpha_j\, b_{j+kn}^\flat,\end{equation}
 where constants $\alpha_j$ appear in the expression of the $\iota$-th column of $A$ as a linear combination of the previous pivotal columns:
$$A_{*\iota}=\sum_{\{j\in p\,|\,  j<\iota\}} \alpha_j  A_{*j}.$$
 \end{lemma}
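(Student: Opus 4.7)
The plan is to verify the identity first as a linear relation in $\k^{n(d+1)}$ (via the isomorphism $\sharp$) and then apply $\flat$. Observe that multiplication by $s^k$ in $\k[s]_d^n$ corresponds to shifting the index of the coefficient vector by $kn$: if $\sigma_k v$ denotes the vector in $\k^{n(d+1)}$ with entries $(\sigma_k v)_i = v_{i-kn}$ for $i>kn$ and $0$ otherwise, then $s^k\cdot v^\flat = (\sigma_k v)^\flat$ whenever no non-zero entry of $v$ is pushed beyond position $n(d+1)$. The hypothesis $k\le\lfloor(n(d+1)-\iota)/n\rfloor$ guarantees this for all shifts that appear here. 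Hence it suffices to prove the vector equation
\begin{equation*}
b_{\iota+kn} \;=\; \sigma_k b_\iota \;+\; \sum_{\{j\in p\,|\,j<\iota,\,j+kn\in q\}} \alpha_j\, b_{j+kn}.
\end{equation*}
Substituting $b_i = e_i - v_i$, the $e_{\iota+kn}$ terms on both sides match (using $\sigma_k e_\iota = e_{\iota+kn}$), and the claim reduces to showing $v_{\iota+kn} = w$, where
\begin{equation*}
w \;:=\; \sigma_k v_\iota \;-\; \sum_{\{j\in p\,|\,j<\iota,\,j+kn\in q\}} \alpha_j\, e_{j+kn} \;+\; \sum_{\{j\in p\,|\,j<\iota,\,j+kn\in q\}} \alpha_j\, v_{j+kn}.
\end{equation*}

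To establish $v_{\iota+kn} = w$ I would invoke a uniqueness principle: since the pivotal columns of $A$ are linearly independent, a vector in $\k^{n(d+1)}$ whose support lies in the pivotal index set $p$ is uniquely determined by its image under $A$. It therefore suffices to verify (i) both $v_{\iota+kn}$ and $w$ are supported on $p$, and (ii) both have image $A_{*\iota+kn}$ under $A$.

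For (i), $v_{\iota+kn}$ is supported on $\{j\in p\mid j<\iota+kn\}$ directly from the construction of $V$ (applicable since $\iota+kn\in q$ by Lemma~\ref{periodic}). In $w$, the partial sum $\sigma_k v_\iota - \sum \alpha_j e_{j+kn}$ has non-zero entries only at positions of the form $j+kn$ with $j\in p,\,j<\iota$: if $j+kn\in p$, only $\sigma_k v_\iota$ contributes (value $\alpha_j$); if $j+kn\in q$, the $\sigma_k v_\iota$ and $-\alpha_j e_{j+kn}$ contributions cancel. The remaining summand $\sum\alpha_j v_{j+kn}$ is supported on $p$ because each $v_{j+kn}$ is (since $j+kn\in q$). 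For (ii), one first notes the universal identity $Av_i = A_{*i}$ for every $i$ (trivially for $i\in p$ since $v_i = e_i$; for $i\in q$ this is precisely the defining relation stored in the entries of $v_i$), giving $Av_{\iota+kn} = A_{*\iota+kn}$ and $Av_{j+kn} = A_{*j+kn}$. Applying $A$ to $w$ produces $A\sigma_k v_\iota = \sum_{j\in p,\,j<\iota}\alpha_j A_{*j+kn}$, which equals $A_{*\iota+kn}$ by Lemma~\ref{periodic}, while the last two sums in $Aw$ cancel since they agree up to sign. Hence $Aw = A_{*\iota+kn}$, completing the argument. The main obstacle I foresee is simply the bookkeeping separating pivotal from non-pivotal images under the shift; once the shift-to-$s^k$ correspondence and the uniqueness-on-$p$ principle are in place, the rest is routine.
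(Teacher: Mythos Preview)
Your argument is correct, but it is organized differently from the paper's proof. The paper works forward by a direct chain of equalities: it first records that $V$ inherits the same linear relations among columns as $A$, so periodicity gives the explicit formula $v_{\iota+kn}=\sum_{\{j\in p\mid j<\iota\}}\alpha_j\,v_{j+kn}$; it then splits this sum according to whether $j+kn\in p$ (where $v_{j+kn}=e_{j+kn}$) or $j+kn\in q$, and after a few substitutions the identity \eqref{eq-bikn} falls out. You instead reduce to the equality $v_{\iota+kn}=w$ and prove it by a \emph{uniqueness principle}: a vector supported on the pivotal index set is determined by its image under $A$, so it suffices to check that both $v_{\iota+kn}$ and $w$ are supported on $p$ and satisfy $Av_{\iota+kn}=Aw=A_{*\iota+kn}$. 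Your route sidesteps the (easy but slightly delicate) claim that $V$ inherits $A$'s periodic column relations, replacing it with the equally elementary observation $Av_i=A_{*i}$ for all $i$; the paper's route avoids introducing the uniqueness principle and keeps everything as explicit coordinate manipulation. Both are short; the paper's version is perhaps more transparent for a reader tracking coefficients, while yours isolates a reusable structural fact about the matrix $V$.
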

 \begin{proof}
 We start by stating  identities, which we use in the proof.  By definition of $V$, we have for any  $j\in p$:
 \begin{equation}\label{eqVj} v_{j}= e_j\end{equation}
 and for any $\iota\in \tilde q$:
\begin{equation}\label{eqVi} v_{\iota}=\sum_{\{j\in p\,|\,  j<\iota\}} \alpha_j  v_{j}=\sum_{\{j\in p\,|\,  j<\iota\}} \alpha_j  e_{j}.\end{equation}
Since $V$ has the same linear relationships  among its columns as $A$, it inherits periodicity property \eqref{eq-periodic}. Therefore,  for  any $\iota\in \tilde q$ and any integer $k$ such that  $ 0\leq k\leq\left\lfloor \frac{n(d+1)-\iota}{n}\right\rfloor$:
\begin{equation}\label{eqVikn} v_{\iota+kn}=\sum_{\{j\in p\,|\,  j<\iota\}}  \alpha_j\, v_{j+kn}.\end{equation}
 We also will use an obvious relationship for any $r \in\{1,\dots, n(d+1)\}$ and  $ 0\leq k\leq\left\lfloor \frac{n(d+1)-r}{n}\right\rfloor$:
\begin{equation}\label{ejkn}e_{r+kn}^\flat=s^k e_r^\flat\end{equation}
and the fact that the set $\{1,\dots, n(d+1)\}$ is a disjoint union of the sets $p$ and $q$.
Then
\begin{align*}b_{\iota+kn}^\flat &= \left(e_{\iota+kn} -v_{\iota+kn} \right)^\flat=s^k e_\iota^\flat-\sum_{\{j\in p\,|\,  j<\iota\}}  \alpha_j\, v_{j+kn}^\flat &\text{by \eqref{eq-b}, \eqref{ejkn} and \eqref{eqVikn}}\\
&=s^k e_\iota^\flat- \sum_{\{j\in p\,|\,j<\iota,\, {j+kn}\in p \}}\,  \alpha_j\,v_{j+kn}^\flat-\sum_{\{j\in p\,|\,j<\iota,\, {j+kn}\in q \}}\, \alpha_j\, v_{j+kn}^\flat &\text{(disjoint union)}\\
&=s^k e_\iota^\flat- \sum_{\{j\in p\,|\,j<\iota,\, {j+kn}\in p \}}\,  \alpha_j\,e_{j+kn}^\flat-\sum_{\{j\in p\,|\,j<\iota,\, {j+kn}\in q \}}\, \alpha_j\, v_{j+kn}^\flat &\text{by \eqref{eqVj}}\\
&=s^k e_\iota^\flat- \sum_{\{j\in p\,|\,j<\iota \}}\,\alpha_j e_{j+kn}^\flat+\sum_{\{j\in p\,|\,j<\iota,\, {j+kn}\in q \}}\, \alpha_j\left( e_{j+kn}^\flat-v_{j+kn}^\flat\right) &\text{(disjoint union)}\\\
&=s^k e_\iota^\flat- \sum_{\{j\in p\,|\,j<\iota \}}\, s^k\, \alpha_j\, e_{j}^\flat +\sum_{\{j\in p\,|\,j<\iota,\, {j+kn}\in q \}}\,  \alpha_j\,b_{j+kn}^\flat  &\text{by \eqref{ejkn} and \eqref{eq-b}}\\
&=s^k \left(e_\iota- \sum_{\{j\in p\,|\,j<\iota \}}\, \, \alpha_j\, e_{j}\right)^\flat +\sum_{\{j\in p\,|\,j<\iota,\, {j+kn}\in q \}}\,  \alpha_j\,b_{j+kn}^\flat\\
&=s^k b_\iota^\flat +\sum_{\{j\in p\,|\,j<\iota,\, {j+kn}\in q \}}\,  \alpha_j\,b_{j+kn}^\flat.  &\text{\eqref{eqVi} and  \eqref{eq-b}} \end{align*}
 \end{proof}

  \begin{example}\rm\label{ex-recc} 
Continuing with Example~\ref{ex-resyz}, where $q=\{ 6,9,11,12,14,15\}$ and  $\tilde q=\{6,11\}$ and $p=\{1,2,3,4,5,7,8, 10,13\}$,
we have:
  \begin{eqnarray}
  \nonumber b_9^\flat&=& s\, b_6^\flat +1\, b_{6}^\flat,\\
   \label{eq-ex-rel}b_{12}^\flat&=&  s^2\, b_6^\flat+1\, b_{9}^\flat+0\,b_{11}^\flat,\\
    \nonumber  b_{14}^\flat&=&  s\, b_{11}^\flat+3\, b_{6}^\flat+(-1)\, b_{11}^\flat\\
    \nonumber b_{15}^\flat&=&  s^3\, b_6^\flat+(-1)\, b_{11}^\flat+1\, b_{12}^\flat+0\, b_{14}^\flat.
  \end{eqnarray}
  \end{example}
  %%
  %%%
In the next lemma, we show that the  subset of row-echelon syzygies indexed by the $n-1$ basic non-pivotal indices  is sufficient to generate $\syz(a)$. 
\begin{lemma}\label{lem-generating} Let $\tilde q$ denote the set of basic non-pivotal indices of $A$. Then
$$\syz(a)=\left<b_{r}^\flat\, |\,r \in \tilde q\right>_{\k[s]}.$$ 
 
\end{lemma}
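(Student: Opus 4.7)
The plan is to combine Lemma~\ref{lem-gen-syz} (which already gives $\syz(a)=\langle b_i^\flat\mid i\in q\rangle_{\k[s]}$, since the set $\{b_i\mid i\in q\}$ is a basis of $\ker(A)$) with the recurrence \eqref{eq-bikn} in order to eliminate, one by one, all row-echelon syzygies whose indices are \emph{not} basic. So the whole argument boils down to showing
\[
b_i^\flat \in \langle b_r^\flat \mid r\in\tilde q\rangle_{\k[s]} \qquad \text{for every } i\in q.
\]

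I would prove this by induction on $i\in q$ (ordered by the usual order on integers). For the base of the induction, the smallest element of $q$ necessarily lies in $\tilde q$ (it is the minimum of its own modulo-$n$ class), so the claim is trivial. For the inductive step, take $i\in q$ and let $\iota\in \tilde q$ be the minimum representative of the class of $i$ modulo $n$, so that $i=\iota+kn$ for some integer $k\ge 0$. If $k=0$, then $i=\iota\in\tilde q$ and we are done. If $k\ge 1$, we invoke \eqref{eq-bikn}:
\[
b_{\iota+kn}^\flat \;=\; s^{k}\, b_\iota^\flat \;+\; \sum_{\{j\in p\,\mid\, j<\iota,\ j+kn\in q\}} \alpha_j\, b_{j+kn}^\flat.
\]
The first term $s^k b_\iota^\flat$ is clearly in $\langle b_r^\flat \mid r\in\tilde q\rangle_{\k[s]}$ since $\iota\in\tilde q$. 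Each term in the sum is indexed by some $j+kn\in q$ with $j<\iota$, hence $j+kn<\iota+kn=i$; by the inductive hypothesis, each $b_{j+kn}^\flat$ already lies in $\langle b_r^\flat \mid r\in\tilde q\rangle_{\k[s]}$. Hence so does $b_i^\flat$.

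Combining this with $\syz(a)=\langle b_i^\flat\mid i\in q\rangle_{\k[s]}$ gives the desired equality (the reverse inclusion $\syz(a)\supset \langle b_r^\flat\mid r\in\tilde q\rangle_{\k[s]}$ is automatic, since each $b_r$ lies in $\ker(A)$ and hence each $b_r^\flat$ lies in $\syz_d(a)\subset \syz(a)$ by Lemma~\ref{lem-isom}). The main thing to check carefully is the strict inequality $j+kn<i$ that powers the induction, together with the observation that indices $j+kn\in p$ do not contribute (because $b_{j+kn}$ appears in \eqref{eq-bikn} only for $j+kn\in q$); both are built into the statement of \eqref{eq-bikn}, so no real obstacle arises and the proof should be very short.
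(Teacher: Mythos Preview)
Your proposal is correct and follows essentially the same route as the paper: both start from Lemma~\ref{lem-gen-syz} to obtain $\syz(a)=\langle b_i^\flat\mid i\in q\rangle_{\k[s]}$, then use the recurrence~\eqref{eq-bikn} and induction on $i\in q$ (via the strict inequality $j+kn<i$) to reduce each $b_i^\flat$ to a $\k[s]$-combination of the $b_\iota^\flat$ with $\iota\in\tilde q$. Your write-up is in fact a bit more explicit about the base case and the reverse inclusion, but the argument is the same.
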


\begin{proof} 
 Since  $\{b_i\,|\, i\in q\}$  comprise a basis of $\mathrm{ker}(A)$, we know from Lemma~\ref{lem-gen-syz} that $\syz(a)=\left<b^\flat_i\,|\, i\in q\right>_{\k[s]}.$
Equation \eqref{eq-bikn} implies that for all $i\in  q$, there exist constant $\beta$'s such that
\begin{equation} b_i^\flat =  s^k\,b_{ \iota}^\flat +\sum_{\{r\in q\,|\,r< i \}}\,\beta_r\, b_{r}^\flat,\end{equation}
where $ \iota\in \tilde q$ is equal to $i$ modulo $n$. 
It follows that inductively we can express  $b_i^\flat$ as a $\k[s]$-linear combination of  $\{b_r | r\in\tilde q\}$ and the conclusion of the lemma follows. 
\end{proof}
%%%
\begin{example}\rm \label{ex-relationship}
   Continuing with Example~\ref{ex-resyz},
we have from \eqref{eq-ex-rel}:
  \begin{eqnarray*}
  b_9^\flat&=& (s+1)\, b_6^\flat ,\\
   b_{12}^\flat&=&  (s^2+s+1)\, b_6^\flat+0\, b_{11}^\flat,\\
     b_{14}^\flat&=&  3\, b_6^\flat+(s-1)\, b_{11}^\flat,\\
     b_{15}^\flat&=&  (s^3+s^2+s+1)\, b_6^\flat+(-1)\, b_{11}^\flat.\\
  \end{eqnarray*}
  \end{example}
We next establish linear independence of the corresponding  leading  vectors: 
\begin{lemma}
\label{lem-leading} The leading vectors $LV(b_r^\flat),\,\, r\in \tilde q$ are linearly independent over $\k$.
\end{lemma}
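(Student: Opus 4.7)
The plan is to exhibit, for each $\iota\in\tilde q$, a distinguished coordinate (a ``pivot'') in $LV(b_\iota^\flat)$ such that the pivots are distinct across $\tilde q$ and each leading vector vanishes strictly above its pivot. This triangular pattern will force linear independence over $\k$.

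First, I would unwind the formula $b_\iota=e_\iota-v_\iota=e_\iota-\sum_{j\in p,\,j<\iota}\alpha_j e_j$. Since $\iota\in q$ (so $\iota\notin p$) and $v_\iota$ is supported on indices $j<\iota$, the largest non-zero entry of the column $b_\iota$ is in position $\iota$ itself, where the value is $1$. Passing through $\flat=S_d^n$, a $1$ in position $r$ contributes $s^{\mathrm{quo}(r-1,n)}$ in coordinate $\mathrm{rem}(r-1,n)+1$. Consequently $\deg(b_\iota^\flat)=d_\iota:=\mathrm{quo}(\iota-1,n)$, and $LV(b_\iota^\flat)\in\k^n$ is the restriction of $b_\iota$ to the block of positions $\{d_\iota n+1,\dots,d_\iota n+n\}$.

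Next, set $m_\iota:=\mathrm{rem}(\iota-1,n)+1\in\{1,\dots,n\}$, so that $\iota=d_\iota n+m_\iota$. Then $LV(b_\iota^\flat)$ has value $1$ in coordinate $m_\iota$. For any coordinate $m>m_\iota$, the associated position $d_\iota n+m$ exceeds $\iota$, and such positions vanish in both $e_\iota$ and $v_\iota$, so $LV(b_\iota^\flat)$ is zero there. (The coordinates $m<m_\iota$ may be non-zero but are irrelevant for the argument.)

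Finally, the defining property of $\tilde q$ is that any two distinct elements represent distinct residue classes modulo $n$, so $\iota\mapsto m_\iota$ is injective on $\tilde q$ and its image consists of $n-1$ distinct elements of $\{1,\dots,n\}$. Arranging the $n-1$ leading vectors as rows ordered by increasing $m_\iota$ yields an $(n-1)\times n$ matrix that is upper-triangular with $1$'s on a partial diagonal, hence of full row rank. Therefore $LV(b_\iota^\flat)$, $\iota\in\tilde q$, are linearly independent over $\k$. The only real work is the index bookkeeping between positions in $\k^{n(d+1)}$ and their images in $\k[s]^n$ under $\flat$; once that is set up carefully, no further obstacle appears.
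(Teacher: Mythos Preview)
Your proof is correct and follows essentially the same approach as the paper: identify that the last non-zero entry of $b_\iota$ sits in position $\iota$ with value $1$, read off that $LV(b_\iota^\flat)$ has a $1$ in coordinate $\mathrm{rem}(\iota-1,n)+1$ and zeros beyond it, and then use that the elements of $\tilde q$ lie in distinct residue classes modulo $n$ to obtain a triangular (hence independent) family. Your write-up is somewhat more explicit about the index bookkeeping under $\flat$, but the argument is the same.
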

\begin{proof}
The leading vector $LV(b_r^\flat)$ is equal to the last non-zero $n$-block in the $n(d+1)$-vector $b_r$. By construction, the last non-zero element of $b_r$ is equal to $1$ and occurs in the $r$-th position. Then $LV(b_r^\flat)$ has $1$ in $\bar r=(r\mod n)$ (the reminder of division of $r$ by $n$) position. All elements of $LV(b_r^\flat)$   positioned after $\bar r$ are zero. Since all  integers in $\tilde q$ are distinct ($\text{modulo } n$), $LV(b_r^\flat),\,\, r\in \tilde q$ are linearly independent over $\k$.

\end{proof}
\begin{example}\rm  The basic non-pivotal columns  of  the matrix   $V$ in Example ~\eqref{ex-resyz} are  columns $6$ and $11$. We previously computed 
\begin{align*}
b_6 &= e_6 - v_6 = [0,1,-1,-1,0,1,0,0,0,0,0,0,0,0,0]^T\\
b_{11} & = e_{11} - v_{11} = [1,2,-3,-2,2,0,-2,1,0,-1,1,0,0,0,0]^T.
\end{align*}
The last non-zero $n$-blocks of $b_6$ and $b_{11}$ are $[-1,0,1]$ and $[-1,1,0]$, respectively. These blocks coincide with $LV(b_6^\flat)$ and $LV(b_{11}^\flat)$ computed in  Example~\ref{ex-resyz}. We observe that these vectors are linearly independent, as expected.
\end{example}

\begin{lemma}
\label{lem-basis} Let polynomial vectors $h_1,\dots, h_{l}\in\k[s]^n$
be such that $LV(h_1),\dots,LV( h_{l})$ are independent over $\k$.
Then $h_1,\dots, h_{l}$ are independent over $\k[s]$.
\end{lemma}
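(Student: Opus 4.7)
The plan is to prove the contrapositive (or, equivalently, argue by contradiction): assume that $h_1,\dots,h_l$ are linearly dependent over $\k[s]$ and derive a non-trivial $\k$-linear dependence among $LV(h_1),\dots,LV(h_l)$.

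Concretely, suppose there exist $g_1,\dots,g_l\in\k[s]$, not all zero, with $\sum_{i=1}^l g_i\,h_i=0$. Let $I=\{i\mid g_i\neq 0\}$, set $t_i=\deg(h_i)$ and $e_i=\deg(g_i)$ for $i\in I$, and let $T=\max_{i\in I}(e_i+t_i)$. Because each component of $g_i h_i$ has degree at most $e_i+t_i$, with equality achieved in at least one component (namely one where $h_i$ attains degree $t_i$), the degree of the vector $g_i h_i$ is exactly $e_i+t_i$, and its coefficient of $s^{e_i+t_i}$ is $\mathrm{lc}(g_i)\,LV(h_i)$, where $\mathrm{lc}(g_i)\in\k\setminus\{0\}$ denotes the leading coefficient of $g_i$. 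Let $J=\{i\in I\mid e_i+t_i=T\}$, which is non-empty by definition of $T$.

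Extracting the coefficient of $s^T$ from the identity $\sum_{i\in I} g_i h_i=0$ yields
\[
\sum_{i\in J}\mathrm{lc}(g_i)\,LV(h_i)=0 \qquad \text{in } \k^n,
\]
since indices in $I\setminus J$ contribute only to lower-degree terms. This is a non-trivial $\k$-linear relation among a subset of $\{LV(h_1),\dots,LV(h_l)\}$, contradicting the hypothesis.

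The only step that demands a bit of care is verifying that the leading vector of $g_ih_i$ really is $\mathrm{lc}(g_i)\,LV(h_i)$ with degree $e_i+t_i$; this follows immediately from $\deg(g_i\,h_{i,k})=e_i+\deg(h_{i,k})$ componentwise and from the definition of the degree and leading vector of a polynomial vector (Definition~\ref{def-basic}). No further machinery is needed, and the argument is essentially the standard leading-term argument used when passing from $\k$-independence of leading data to $\k[s]$-independence.
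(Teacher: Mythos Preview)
Your proof is correct and follows essentially the same approach as the paper: assume a nontrivial $\k[s]$-relation $\sum_i g_i h_i=0$, look at the maximal value of $\deg(g_i)+\deg(h_i)$ over the indices with $g_i\neq 0$, and read off a nontrivial $\k$-relation among the corresponding leading vectors. Your version is slightly more explicit in restricting first to $I=\{i:g_i\neq 0\}$ and in checking that $LV(g_ih_i)=\mathrm{lc}(g_i)\,LV(h_i)$, but the argument is the same.
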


\begin{proof}
Assume that $h_{1},\dots ,h_{l}$ are linearly \emph{dependent} over $\mathbb{%
K}[s]$, i.e.~there exist polynomials $g_{1},\dots ,g_{l}\in \k[s]$,
not all zero, such that 
\begin{equation}
\sum_{i=1}^{l}g_{i}\,h_{i}=0.  \label{eq-zero}
\end{equation}%
Let $m=\displaystyle{\max_{i=1,\dots ,l}}\left( \deg (g_{i})+\deg
(h_{i})\right) $ and let $\mathcal{I}$ be the set of indices on which this
maximum is achieved. Then \eqref{eq-zero} implies 
\begin{equation*}
\sum_{i\in \mathcal{I}}LC(g_{i})\,LV(h_{i})=0,
\end{equation*}%
where $LC(g_{i})$ is the leading coefficient of $g_{i}$ and is non-zero for $%
i\in \mathcal{I}$. This identity contradicts our assumption that $LV(h_{1}),\dots
,LV(h_{l})$ are linearly independent over $\k$.
\end{proof}

\begin{theorem}[Main]
\label{thm-mubasis} The set $u=\{b_{r}^{\flat }\,|\,r\in \tilde{q}\}$ is a $%
\mu $-basis of $a$.
\end{theorem}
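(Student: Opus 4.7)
The plan is to verify each of the three defining properties of a $\mu$-basis as listed in Definition~\ref{def-mubasis} by directly citing the lemmas already established in Section~\ref{sect-resyz}. No new machinery should be needed: all the ingredients are in place, and the proof amounts to assembling them in the right order.

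First I would dispatch the cardinality condition by invoking Lemma~\ref{lem-card}, which shows $|\tilde q| = n-1$, so $u$ has exactly $n-1$ elements. Next, the $\k$-linear independence of $LV(b_r^{\flat})$ for $r \in \tilde q$ is exactly the content of Lemma~\ref{lem-leading}, which yields the second defining property.

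For the third property, that $u$ is a $\k[s]$-basis of $\syz(a)$, I need both generation and $\k[s]$-linear independence. Generation is Lemma~\ref{lem-generating}. For linear independence, I would apply Lemma~\ref{lem-basis} to the vectors $\{b_r^{\flat} : r \in \tilde q\}$: since their leading vectors are $\k$-linearly independent (Lemma~\ref{lem-leading}), the vectors themselves are $\k[s]$-linearly independent, and so together with generation they form a basis of the $\k[s]$-module $\syz(a)$.

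There is no real obstacle here, as the heavy lifting has already been done in the preceding lemmas; the theorem is essentially a corollary that collects Lemmas~\ref{lem-card}, \ref{lem-generating}, \ref{lem-leading}, and~\ref{lem-basis}. The only point worth spelling out carefully is the logical structure: properties (1) and (2) of Definition~\ref{def-mubasis} are immediate from Lemmas~\ref{lem-card} and~\ref{lem-leading}, while property (3) requires combining the generation statement of Lemma~\ref{lem-generating} with the independence statement obtained by feeding Lemma~\ref{lem-leading} into Lemma~\ref{lem-basis}.
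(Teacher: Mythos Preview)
Your proposal is correct and follows essentially the same approach as the paper's own proof: verify the three conditions of Definition~\ref{def-mubasis} by invoking Lemmas~\ref{lem-card}, \ref{lem-leading}, \ref{lem-generating}, and~\ref{lem-basis} in exactly the order you describe. The only small detail the paper makes explicit that you leave implicit is that $|\tilde q|=n-1$ implies $|u|=n-1$ because the map $r\mapsto b_r^{\flat}$ is injective (distinct $b_r$'s together with $\flat$ being an isomorphism), but this is immediate from the construction.
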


\begin{proof}
We will check  that $u$
satisfies the three conditions of a $\mu $-basis in Definition \ref{def-mubasis}.

\begin{enumerate}
\item 
From Lemma \ref{lem-card}, there are exactly $n-1$ elements in $\tilde{q}$.
 Since $b_{r_{1}}^{ }\neq
b_{r_{2}}^{ }\ \,$for $r_{1}\neq r_{2}\in $ $\tilde{q}$ and since $\flat$ is an isomorphism, the set $u$ contains 
exactly $n-1$ elements.

\item
From Lemma~\ref{lem-leading}, we know that  the leading vectors $LV(b_r^\flat),\,\, r\in \tilde q$ are linearly independent over $\k$.

\item 
Lemma \ref{lem-generating} asserts  that the set $u$ generates $\syz%
(a)$.  By combining Lemmas \ref{lem-leading} and \ref{lem-basis}, we see that
the elements of this set are independent over $\k[s]$. Therefore   $u$ is a basis of $\syz(a)$.
\end{enumerate}
\end{proof}

\begin{remark}
\textrm{We note that by construction the last non-zero entry of vector $%
b_{r} $ is in the $r$-th position, and therefore 
\begin{equation*}
\deg (b_{r}^{\flat })=\big\lceil{r/n}\big\rceil-1.
\end{equation*}%
Thus we can determine the degrees of the $\mu $-basis elements \emph{prior
to computing the $\mu $-basis} from the set of basic non-pivotal
indices. }
\end{remark}

\begin{example}\rm 
\textrm{For the row vector  $a$ given in the running example 
(Example~\ref{ex-prob}), we determined that $\tilde q=\{6,11\}$. Therefore, prior to
computing a $\mu$-basis, we can determine the degrees of its members: $\mu_1=%
\big\lceil{\ 6/ 3}\big\rceil-1=1$ and $\mu_2=\big\lceil{\ 11/ 3}\big\rceil%
-1=3$. We now can apply Theorem~\ref{thm-mubasis} and the computation we
performed in Example~\ref{ex-resyz} to write down a $\mu$-basis: 
\begin{equation*}
b_6^\flat=\left[ 
\begin{array}{c}
-s \\ 
1 \\ 
-1+s%
\end{array}
\right] \text { and } b_{11}^\flat=\left[ 
\begin{array}{c}
1-2s-2s^{2}-s^{3} \\ 
2+2s+s^{2}+s^{3} \\ 
-3%
\end{array}
\right].
\end{equation*}
We observe that our degree prediction is correct. }
\end{example}

%%%%%%%

\section{Algorithm}
\label{sect-alg}

In this section, we describe an algorithm for computing  $\mu$-bases of univariate polynomials.
We assume that the reader is familiar with  Gauss-Jordan elimination 
(for computing reduced row-echelon forms and in turn null vectors), which can be found in any standard linear algebra textbook. 
 The theory developed in the previous sections can be recast into the following computational steps:  
\begin{enumerate}
\item {\em  Construct a matrix $A \in \k^{(2d+1)\times n(d+1)} $ whose null space
corresponds to $\syz_d(a)$.}
\item {\em Compute the reduced row-echelon form $E$ of $A$.}
\item {\em Construct a matrix $M\in\k[s]^{n\times\left(n-1\right)}$ 
whose columns form a $\mu$-basis of $a$}, as follows:

\begin{enumerate}
\item Construct the matrix $B \in \mathbb{K}^{n(d+1)\times (n-1)}$ 
whose columns are the null vectors of~$E$ corresponding to its  basic non-pivot columns: 
\begin{itemize}
\item $B_{\tilde{q}_{j},j}=1$
\item $B_{p_{r},j}=-E_{r,\tilde{q}_{j}}$ for all $r$
\item All other entries are zero
\end{itemize}
where $p$ is the list of the pivotal indices and 
 $\tilde{q}$ is the list of the basic non-pivotal indices of   $E$.

\item Translate the columns of $B$ into polynomials.
\end{enumerate}
\end{enumerate}
However,  steps 2 and 3  do some wasteful operations and they can be  improved, as follows:
\begin{itemize}
\item Note that step 2  constructs the  entire reduced row-echelon form of $A,$ even though  we only need  $n-1$ null vectors corresponding to its basic non-pivot columns.   Hence, it is natural to optimize this step  so that  only the $n-1$ null vectors
are constructed: instead of using Gauss-Jordan elimination  to compute the entire reduced row-echelon form, one  performs operations column by column only  on the pivot columns and basic non-pivot columns. One  aborts the elimination process as soon as $n-1$ basic non-pivot columns are found, resulting in  a  partial reduced row-echelon form of $A$. 

\item Note that step 3 constructs the entire matrix $B$ even though many entries are zero. Hence, it is natural to optimize this step   so that   we bypass constructing the matrix $B$, but  instead construct the matrix $M$   directly  from the matrix $E$. This is possible  because the matrix $E$  contains all the information about the  matrix $B$.

\end{itemize}
Below, we describe the resulting algorithm in more detail and illustrate its operation on our running example (Example~\ref{ex-prob}). 
\vskip5mm

\noindent {\bf  $\mu$-Basis Algorithm}
\begin{description}
[leftmargin=5em,style=nextline,itemsep=-.02em]

\item[$Input$] $a\neq 0 \in\k[s]^{n}$, row vector, where $n>1$ and $\k$
is a computable field
\item[$Output$] $M\in\k[s]^{n\times\left(  n-1\right)  }$ such that
its columns form a $\mu$-basis of $a$
\end{description}

\begin{enumerate}

\item \ \emph{Construct a matrix $A \in \k^{(2d+1)\times n(d+1)} $ whose null space
corresponds to $\syz_d(a)$.}
   \begin{enumerate} 
   \item $d\longleftarrow\deg(a)$

\item Identify the row vectors $c_{0},\ldots,c_{d} \in \k^n$ such that $a=c_{0}+c_{1}%
s+\cdots+c_{d}s^{d}$.

\item $A\longleftarrow \left[
\begin{array}
[c]{ccc}%
c_{0} &  & \\
\vdots & \ddots & \\
c_d & \vdots & c_{0}\\
& \ddots & \vdots\\
&  & c_{d}%
\end{array}
\right]$
\end{enumerate}

\item \ \emph{Construct the ``partial'' reduced row-echelon form  $E$ of $A$.}
\item[] This can be done by using Gauss-Jordan elimination (forward elimination, backward elimination, and normalization), with the following optimizations: 
\begin{itemize}
\item Stop the forward elimination   as soon as  $n-1$ basic non-pivot columns are detected.
\item Skip over periodic non-pivot columns. 
\item Carry out the row operations only on the required columns.
\end{itemize}

\item \ \emph{Construct a matrix $M\in\k[s]^{n\times\left(  n-1\right)  } $ whose columns form a $\mu$-basis of $a$.}

Let $p$ be the list of the pivotal indices and 
let $\tilde{q}$ be the list of the basic non-pivotal indices of   $E$.
\begin{enumerate}
\item Initialize an $n\times n-1 $ matrix $M$ with  $0$ in  every entry.

\item For $j=1,\ldots,n-1$

\qquad$r\leftarrow\operatorname*{rem}\left(  \tilde{q}_{j}-1,n\right)  +1$

\qquad$k\leftarrow\operatorname*{quo}\left(  \tilde{q}_{j}-1,n\right)  $

\qquad$M_{r,j\ }\leftarrow M_{r,j}+s^{k}$

\item For $i=1,\ldots,|p|$

\qquad$r\leftarrow\operatorname*{rem}\left(  p_{i}-1,n\right)  +1$

\qquad$k\leftarrow\operatorname*{quo}\left(  p_{i}-1,n\right)  $

\qquad For $j=1,\ldots,n-1$

\qquad$\qquad M_{r,j}\leftarrow M_{r,j}-E_{i,\tilde{q}_{j}}s^{k}$

\end{enumerate}
\end{enumerate}
\vskip2mm
%%%%%
%
\begin{theorem}
Let $M$ be the output of the $\mu$-Basis Algorithm on the input $a \in \k[s]^n$. Then the columns of $M$ form a $\mu$-basis for $a$.
\end{theorem}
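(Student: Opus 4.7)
The plan is to reduce the theorem directly to Theorem~\ref{thm-mubasis} by showing that, for each $j = 1,\dots,n-1$, the $j$-th column of the output matrix $M$, viewed as a polynomial vector, equals $b_{\tilde{q}_j}^{\flat}$. Since Theorem~\ref{thm-mubasis} already asserts that $\{b_r^{\flat} \mid r \in \tilde{q}\}$ is a $\mu$-basis of $a$, this identification suffices. So the entire proof amounts to tracing through what the algorithm builds and matching it against the explicit formula $b_r = e_r - v_r$ from the construction preceding Lemma~\ref{lem-generating}.

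First I would unwind Steps~3(b) and 3(c) entry by entry. In Step~3(b), the algorithm writes $s^k$ into position $r$ of column $j$, where $\tilde{q}_j = kn + r$ with $1 \leq r \leq n$; under the isomorphism $\flat$ from \eqref{iso2}, this placement produces exactly $e_{\tilde{q}_j}^{\flat}$. In Step~3(c), for each pivotal index $p_i = k'n + r'$, the value $E_{i,\tilde{q}_j}\, s^{k'}$ is subtracted from the entry in row $r'$, which under $\flat$ corresponds to subtracting $E_{i,\tilde{q}_j}\, e_{p_i}^{\flat}$. So after all iterations,
\begin{equation*}
M_{*j} \;=\; e_{\tilde{q}_j}^{\flat} \;-\; \sum_{i=1}^{|p|} E_{i,\tilde{q}_j}\, e_{p_i}^{\flat}.
\end{equation*}
Next I would identify the sum with $v_{\tilde{q}_j}^{\flat}$. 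By the standard characterization of the reduced row-echelon form, if $A_{*\tilde{q}_j} = \sum_{p_i < \tilde{q}_j} \alpha_{p_i} A_{*p_i}$, then $E_{i,\tilde{q}_j} = \alpha_{p_i}$ when $p_i < \tilde{q}_j$ and $E_{i,\tilde{q}_j} = 0$ when $p_i > \tilde{q}_j$ (the entries below the pivotal staircase vanish). Combined with the definition of $V$, this gives $v_{\tilde{q}_j} = \sum_i E_{i,\tilde{q}_j}\, e_{p_i}$. Hence $M_{*j} = (e_{\tilde{q}_j} - v_{\tilde{q}_j})^{\flat} = b_{\tilde{q}_j}^{\flat}$, and Theorem~\ref{thm-mubasis} finishes the argument.

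The main obstacle is justifying that Step~2 as actually implemented — the ``partial'' reduced row-echelon form with three optimizations — returns the same values $E_{i,\tilde{q}_j}$ in the columns indexed by $\tilde{q}$ as the full reduced row-echelon form would. I would dispatch the three optimizations in turn: aborting once $n-1$ basic non-pivot columns are found is valid because $|\tilde{q}| = n-1$ by Lemma~\ref{lem-card}, so no further basic non-pivot columns exist; skipping periodic non-pivot columns is valid by the periodicity Lemma~\ref{periodic}, which guarantees these columns carry the same linear dependence data modulo the pivot structure and are not needed to produce any $\tilde{q}_j$-column; and restricting row operations to the retained columns is valid because Gauss--Jordan elimination acts columnwise, so ignored columns cannot affect the retained ones. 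Since the reduced row-echelon form is unique, the retained columns then match the corresponding columns of $E$ exactly, and the identification $M_{*j} = b_{\tilde{q}_j}^{\flat}$ carries over verbatim.
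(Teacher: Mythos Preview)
Your proof is correct and follows essentially the same approach as the paper: both reduce the correctness of the algorithm to Theorem~\ref{thm-mubasis} by identifying the $j$-th column of $M$ with $b_{\tilde q_j}^{\flat}$. Your version is more detailed than the paper's, which simply asserts that Step~3 ``converts the basic non-pivot columns of $E$ into polynomial vectors using the $\flat$-isomorphism''; in particular, your explicit unwinding of Steps~3(b)--(c) and your justification of the three partial-RREF optimizations fill in details the paper leaves implicit.
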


\begin{proof}
In step 1, we construct the matrix $A$ whose null space corresponds to $\syz_d(a)$ as has been shown in Lemma \ref{lem-isom}. In step 2, we perform partial Gauss-Jordan operations on $A$ to identify the $n-1$ basic non-pivot columns of its reduced row-echelon form~$E$. In Lemma \ref{lem-card}, we showed that there are exactly  $n-1$ such columns. In step 3, we convert the basic non-pivot columns of $E$ into polynomial vectors, using the $\flat$-isomorphism described in Section~\ref{sect-syzd}, and return these polynomial vectors as  columns of the matrix $M$.  From Theorem~\ref{thm-mubasis} it follows that the columns of $M$ indeed form a $\mu$-basis of $a$, because  they satisfy the generating, leading vector, and linear independence conditions of Definition \ref{def-mubasis} of a $\mu$-basis.
\end{proof}

%%%%%%
\begin{example}
We trace the algorithm (with partial Gauss-Jordan) on the input vector from  Example~\ref{ex-prob}:
\[
a=\left[
\begin{array}
[c]{ccc}%
1+s^{2}+s^{4} & 1+s^{3}+s^{4} & 1+s^{4}%
\end{array}
\right]
\in \mathbb{Q}[s]^3
\]

\begin{enumerate}\rm
\newcolumntype{C}{>{\small\raggedleft\color{black}\arraybackslash$}p{1.2em}<{$}}
\newcolumntype{G}{>{\small\raggedleft\color{gray}\arraybackslash$}p{1.2em}<{$}}
\newcolumntype{B}{>{\small\raggedleft\color{blue}\arraybackslash$}p{1.2em}<{$}}
\newcolumntype{R}{>{\small\raggedleft\color{red}\arraybackslash$}p{1.2em}<{$}}
\newcolumntype{T}{>{\small\raggedleft\color{brown}\arraybackslash$}p{1.2em}<{$}}

\item \ \emph{Construct a matrix $A \in \k^{(2d+1)\times n(d+1)} $ whose null space
corresponds to $\syz_d(a)$}:
\begin{enumerate}
\arraycolsep=0.2em
\def\arraystretch{0.5}
\item  $d\longleftarrow4$

\item $c_{0},c_{1},c_{2},c_{3},c_{4}\longleftarrow\left[
\begin{array}
[c]{ccc}%
1 & 1 & 1
\end{array}
\right]  ,\left[
\begin{array}
[c]{ccc}%
0 & 0 & 0
\end{array}
\right]  ,\left[
\begin{array}
[c]{ccc}%
1 & 0 & 0
\end{array}
\right]  ,\left[
\begin{array}
[c]{ccc}%
0 & 1 & 0
\end{array}
\right]  ,\left[
\begin{array}
[c]{ccc}%
1 & 1 & 1
\end{array}
\right]  $

\item $A\longleftarrow\left[
\begin{array}{CCC|CCC|CCC|CCC|CCC}%
1 & 1 & 1 &  &  &  &  &  &  &  &  &  &  &  & \\
0 & 0 & 0 & 1 & 1 & 1 &  &  &  &  &  &  &  &  & \\
1 & 0 & 0 & 0 & 0 & 0 & 1 & 1 & 1 &  &  &  &  &  & \\
0 & 1 & 0 & 1 & 0 & 0 & 0 & 0 & 0 & 1 & 1 & 1 &  &  & \\
1 & 1 & 1 & 0 & 1 & 0 & 1 & 0 & 0 & 0 & 0 & 0 & 1 & 1 & 1\\
 &  &  & 1 & 1 & 1 & 0 & 1 & 0 & 1 & 0 & 0 & 0 & 0 & 0\\
 &  &  &  &  &  & 1 & 1 & 1 & 0 & 1 & 0 & 1 & 0 & 0\\
 &  &  &  &  &  &  &  &  & 1 & 1 & 1 & 0 & 1 & 0\\
 &  &  &  &  &  &  &  &  &  &  &  & 1 & 1 & 1
\end{array}
\right]$
\end{enumerate}
\item[] A blank indicates that the entry is zero due to structural reasons. 

\item \ \emph{Construct the ``partial'' reduced row-echelon form  $E$ of $A$}:
\item[] For this step, we will maintain/update the following data structures.
\begin{itemize}
\item $E$: the matrix initialized with $A$ and updated by the Gauss-Jordan process.
\item $p$: the set of the pivotal indices found.
\item $\tilde{q}$: the set of the basic non-pivotal indices found.
\item $O$: the list of the row operations, represented as follows. 
  \begin{itemize}
  \item[] $(i,i')$\ \ \ \ : swap $E_{i,j}$ with $E_{i',j}$ 
  \item[] $(i,w,i')$\,: $E_{i,j} \longleftarrow E_{i,j} + w \cdot E_{i',j}$
  \end{itemize}
  where $j$ is the current column index.
\end{itemize}
We will also indicate the update status of the columns of $E$  using  the following color codings.

\begin{tabular}{p{1em}lcl}
&\textcolor{gray}{gray}  &:& not yet updated \\
&\textcolor{blue}{blue}  &:& pivot \\
&\textcolor{red}{red}    &:& basic non-pivot  \\
&\textcolor{brown}{brown}&:& periodic non-pivot 
\end{tabular}

Now we show the trace.
\begin{enumerate}
\arraycolsep=0.2em
\def\arraystretch{0.5}
\item Initialize. \\
$p \longleftarrow \{\ \}$
\item[] 
$\tilde{q} \longleftarrow \{\ \}$
\item[] 
$E \longleftarrow \left[ \begin {array}{GGG|GGG|GGG|GGG|GGG}
 1&1&1&&&&&&&&&&&&
\\ 0&0&0&1&1&1&&&&&&&&&
\\ 1&0&0&0&0&0&1&1&1&&&&&&
\\ 0&1&0&1&0&0&0&0&0&1&1&1&&&
\\ 1&1&1&0&1&0&1&0&0&0&0&0&1&1&1
\\ &&&1&1&1&0&1&0&1&0&0&0&0&0
\\ &&&&&&1&1&1&0&1&0&1&0&0
\\ &&&&&&&&&1&1&1&0&1&0
\\ &&&&&&&&&&&&1&1&1\end {array}
\right]$
\item[] 
$O \longleftarrow[\ ]$
\item
$j \longleftarrow1$
\item[] Carry out the row operations in $O$ on column $1$. (Nothing to do.)
\item[]
$E \longleftarrow \left[ \begin {array}{BGG|GGG|GGG|GGG|GGG} 
   1&1&1&&&&&&&&&&&&
\\ 0&0&0&1&1&1&&&&&&&&&
\\ 1&0&0&0&0&0&1&1&1&&&&&&
\\ 0&1&0&1&0&0&0&0&0&1&1&1&&&
\\ 1&1&1&0&1&0&1&0&0&0&0&0&1&1&1
\\ &&&1&1&1&0&1&0&1&0&0&0&0&0
\\ &&&&&&1&1&1&0&1&0&1&0&0
\\ &&&&&&&&&1&1&1&0&1&0
\\ &&&&&&&&&&&&1&1&1\end {array}
\right]$
\item[] Identify column $1$ as a pivot.
\item[] 
$p \longleftarrow\{1\}$
\item[] 
$\tilde{q}\longleftarrow\{\ \}$
\item[] Carry out the row operations $(3,-1,1),(5,-1,1)$ on column $1$.
\item[] 
$E\longleftarrow\left[ \begin {array}{BGG|GGG|GGG|GGG|GGG} 
   1&1&1&&&&&&&&&&&&
\\ &0&0&1&1&1&&&&&&&&&
\\ &0&0&0&0&0&1&1&1&&&&&&
\\ &1&0&1&0&0&0&0&0&1&1&1&&&
\\ &1&1&0&1&0&1&0&0&0&0&0&1&1&1
\\ &&&1&1&1&0&1&0&1&0&0&0&0&0
\\ &&&&&&1&1&1&0&1&0&1&0&0
\\ &&&&&&&&&1&1&1&0&1&0
\\ &&&&&&&&&&&&1&1&1\end {array}
\right]$\\
\item[] Append $(3,-1,1),(5,-1,1)$ to $O$.
\item[]
$O\longleftarrow [(3,-1,1),(5,-1,1)]$
\item 
$j\longleftarrow2$
\item[] Carry out the  row operations in $O$ on column $2$.
\item[]
$E \longleftarrow \left[ \begin {array}{BBG|GGG|GGG|GGG|GGG} 
   1&1&1&&&&&&&&&&&&
\\ &0&0&1&1&1&&&&&&&&&
\\ &-1&0&0&0&0&1&1&1&&&&&&
\\ &1&0&1&0&0&0&0&0&1&1&1&&&
\\ &0&1&0&1&0&1&0&0&0&0&0&1&1&1
\\ &&&1&1&1&0&1&0&1&0&0&0&0&0
\\ &&&&&&1&1&1&0&1&0&1&0&0
\\ &&&&&&&&&1&1&1&0&1&0
\\ &&&&&&&&&&&&1&1&1\end {array}
\right]$
\item[] Identify column $2$ as a pivot.
\item[] 
$p\longleftarrow \{1,2\}$
\item[] 
$\tilde{q}\longleftarrow\{\}$
\item[] Carry out the row operations $(3,2),(4,1,2)$ on column $2$.
\item[] 
$E\longleftarrow\left[ \begin {array}{BBG|GGG|GGG|GGG|GGG} 
   1&1&1&&&&&&&&&&&&
\\ &-1&0&1&1&1&&&&&&&&&
\\ &&0&0&0&0&1&1&1&&&&&&
\\ &&0&1&0&0&0&0&0&1&1&1&&&
\\ &&1&0&1&0&1&0&0&0&0&0&1&1&1
\\ &&&1&1&1&0&1&0&1&0&0&0&0&0
\\ &&&&&&1&1&1&0&1&0&1&0&0
\\ &&&&&&&&&1&1&1&0&1&0
\\ &&&&&&&&&&&&1&1&1\end {array}
\right]$
\item[] Append $(3,2),(4,1,2)$ to $O$.
\item[]
$O\longleftarrow[(3,-1,1),(5,-1,1),(3,2),(4,1,2)]$
\item
$j\longleftarrow3$
\item[] Carry out the  row operations in $O$ on column $3$.
\item[]
$E \longleftarrow \left[ \begin {array}{BBB|GGG|GGG|GGG|GGG} 
   1&1&1&&&&&&&&&&&&
\\ &-1&-1&1&1&1&&&&&&&&&
\\ &&0&0&0&0&1&1&1&&&&&&
\\ &&-1&1&0&0&0&0&0&1&1&1&&&
\\ &&0&0&1&0&1&0&0&0&0&0&1&1&1
\\ &&&1&1&1&0&1&0&1&0&0&0&0&0
\\ &&&&&&1&1&1&0&1&0&1&0&0
\\ &&&&&&&&&1&1&1&0&1&0
\\ &&&&&&&&&&&&1&1&1\end {array}
\right]$
\item[] Identify column $3$ as a pivot.
\item[]
$p\longleftarrow\{1,2,3\}$
\item[]
$\tilde{q}\longleftarrow\{\ \}$
\item[] Carry out the row operation $(4,3)$ on column $3$.
\item[]
$E\longleftarrow\left[ \begin {array}{BBB|GGG|GGG|GGG|GGG} 
   1&1&1&&&&&&&&&&&&
\\ &-1&-1&1&1&1&&&&&&&&&
\\ &&-1&0&0&0&1&1&1&&&&&&
\\ &&&1&0&0&0&0&0&1&1&1&&&
\\ &&&0&1&0&1&0&0&0&0&0&1&1&1
\\ &&&1&1&1&0&1&0&1&0&0&0&0&0
\\ &&&&&&1&1&1&0&1&0&1&0&0
\\ &&&&&&&&&1&1&1&0&1&0
\\ &&&&&&&&&&&&1&1&1\end {array}
\right]$
\item[] Append $(4,3)$ to $O$.
\item[]
$O\longleftarrow[(3,-1,1),(5,-1,1),(3,2),(4,1,2),(4,3)]$
\item
$j\longleftarrow4$
\item[] Carry out the  row operations in $O$ on column $4$.
\item[]
$E \longleftarrow \left[ \begin {array}{BBB|BGG|GGG|GGG|GGG} 
   1&1&1&&&&&&&&&&&&
\\ &-1&-1&0&1&1&&&&&&&&&
\\ &&-1&1&0&0&1&1&1&&&&&&
\\ &&&1&0&0&0&0&0&1&1&1&&&
\\ &&&0&1&0&1&0&0&0&0&0&1&1&1
\\ &&&1&1&1&0&1&0&1&0&0&0&0&0
\\ &&&&&&1&1&1&0&1&0&1&0&0
\\ &&&&&&&&&1&1&1&0&1&0
\\ &&&&&&&&&&&&1&1&1\end {array}
\right]$
\item[]  Identify column $4$ as a pivot.
\item[]
$p\longleftarrow\{1,2,3,4\}$
\item[]
$\tilde{q}\longleftarrow\{\ \}$
\item[]  Carry out the row operation $(6,-1,4)$ on column $4$.
\item[]
$E\longleftarrow\left[ \begin {array}{BBB|BGG|GGG|GGG|GGG} 
   1&1&1&&&&&&&&&&&&
\\ &-1&-1&0&1&1&&&&&&&&&
\\ &&-1&1&0&0&1&1&1&&&&&&
\\ &&&1&0&0&0&0&0&1&1&1&&&
\\ &&&&1&0&1&0&0&0&0&0&1&1&1
\\ &&&&1&1&0&1&0&1&0&0&0&0&0
\\ &&&&&&1&1&1&0&1&0&1&0&0
\\ &&&&&&&&&1&1&1&0&1&0
\\ &&&&&&&&&&&&1&1&1\end {array}
\right]$
\item[]  Append $(6,-1,4)$ to $O$.
\item[]
$O\longleftarrow[(3,-1,1),(5,-1,1),(3,2),(4,1,2),(4,3),(6,-1,4)]$
\item
$j\longleftarrow5$
\item[]  Carry out the  row operations in $O$ on column $5$.
\item[]
$E \longleftarrow \left[ \begin {array}{BBB|BBG|GGG|GGG|GGG} 
   1&1&1&&&&&&&&&&&&
\\ &-1&-1&0&0&1&&&&&&&&&
\\ &&-1&1&0&0&1&1&1&&&&&&
\\ &&&1&1&0&0&0&0&1&1&1&&&
\\ &&&&1&0&1&0&0&0&0&0&1&1&1
\\ &&&&0&1&0&1&0&1&0&0&0&0&0
\\ &&&&&&1&1&1&0&1&0&1&0&0
\\ &&&&&&&&&1&1&1&0&1&0
\\ &&&&&&&&&&&&1&1&1\end {array}
\right]$
\item[]  Identify column $5$ as a pivot.
\item[]
$p\longleftarrow\{1,2,3,4,5\}$
\item[]
$\tilde{q}\longleftarrow\{\ \}$
\item[]  No row operations needed on column $5$.
\item[]
$E\longleftarrow\left[ \begin {array}{BBB|BBG|GGG|GGG|GGG} 
   1&1&1&&&&&&&&&&&&
\\ &-1&-1&0&0&1&&&&&&&&&
\\ &&-1&1&0&0&1&1&1&&&&&&
\\ &&&1&1&0&0&0&0&1&1&1&&&
\\ &&&&1&0&1&0&0&0&0&0&1&1&1
\\ &&&&&1&0&1&0&1&0&0&0&0&0
\\ &&&&&&1&1&1&0&1&0&1&0&0
\\ &&&&&&&&&1&1&1&0&1&0
\\ &&&&&&&&&&&&1&1&1\end {array}
\right]$ 
\item[] Nothing to append to $O$.
\item[]
$O\longleftarrow[(3,-1,1),(5,-1,1),(3,2),(4,1,2),(4,3),(6,-1,4)]$
\item
$j\longleftarrow6$
\item[]  Carry out the  row operations in $O$ on column $6$.
\item[]
$E \longleftarrow \left[ \begin {array}{BBB|BBR|GGG|GGG|GGG} 
   1&1&1&&&&&&&&&&&&
\\ &-1&-1&0&0&0&&&&&&&&&
\\ &&-1&1&0&0&1&1&1&&&&&&
\\ &&&1&1&1&0&0&0&1&1&1&&&
\\ &&&&1&0&1&0&0&0&0&0&1&1&1
\\ &&&&&0&0&1&0&1&0&0&0&0&0
\\ &&&&&&1&1&1&0&1&0&1&0&0
\\ &&&&&&&&&1&1&1&0&1&0
\\ &&&&&&&&&&&&1&1&1\end {array}
\right]$
\item[]  Identify column $6$ as a basic non-pivot: column 6 is non-pivotal  because  it does not have non-zero entries below the 5-th row and therefore it is a linear combination of the five previous pivotal columns:   $E_{*6}=-E_{*2}+E_{*3}+E_{*4}$. Column 6 is basic because its index is minimal in  its equivalence class $q/(3)$.
\item[]
$p\longleftarrow\{1,2,3,4,5\}$
\item[]
$\tilde{q}\longleftarrow\{6\}$
\item[] No row operations needed on column $6$.
\item[]
$E\longleftarrow\left[ \begin {array}{BBB|BBR|GGG|GGG|GGG} 
   1&1&1&&&&&&&&&&&&
\\ &-1&-1&0&0&0&&&&&&&&&
\\ &&-1&1&0&0&1&1&1&&&&&&
\\ &&&1&1&1&0&0&0&1&1&1&&&
\\ &&&&1&0&1&0&0&0&0&0&1&1&1
\\ &&&&&&0&1&0&1&0&0&0&0&0
\\ &&&&&&1&1&1&0&1&0&1&0&0
\\ &&&&&&&&&1&1&1&0&1&0
\\ &&&&&&&&&&&&1&1&1\end {array}
\right]$ 
\item[] Nothing to append to $O$.
\item[]
$O\longleftarrow[(3,-1,1),(5,-1,1),(3,2),(4,1,2),(4,3),(6,-1,4)]$

\item
$j\longleftarrow7$
\item[]  Carry out the  row operations in $O$ on column $7$.
\item[]
$E \longleftarrow \left[ \begin {array}{BBB|BBR|BGG|GGG|GGG} 
   1&1&1&&&&&&&&&&&&
\\ &-1&-1&0&0&0&1&&&&&&&&
\\ &&-1&1&0&0&1&1&1&&&&&&
\\ &&&1&1&1&0&0&0&1&1&1&0&0&0
\\ &&&&1&0&1&0&0&0&0&0&1&1&1
\\ &&&&&&0&1&0&1&0&0&0&0&0
\\ &&&&&&1&1&1&0&1&0&1&0&0
\\ &&&&&&&&&1&1&1&0&1&0
\\ &&&&&&&&&&&&1&1&1\end {array}
\right]$
\item[]  Identify column $7$ as a pivot.
\item[]
$p\longleftarrow\{1,2,3,4,5,7\}$
\item[]
$\tilde{q}\longleftarrow\{6\}$
\item[]  Carry out the row operations $(7,6)$ on column $7$.
\item[]
$E\longleftarrow\left[ \begin {array}{BBB|BBR|BGG|GGG|GGG} 
   1&1&1&&&&&&&&&&&&
\\ &-1&-1&0&0&0&1&&&&&&&&
\\ &&-1&1&0&0&1&1&1&&&&&&
\\ &&&1&1&1&0&0&0&1&1&1&&&
\\ &&&&1&0&1&0&0&0&0&0&1&1&1
\\ &&&&&&1&1&0&1&0&0&0&0&0
\\ &&&&&&&1&1&0&1&0&1&0&0
\\ &&&&&&&&&1&1&1&0&1&0
\\ &&&&&&&&&&&&1&1&1\end {array}
\right]$ 
\item[]  Append $(7,6)$ to $O$.
\item[]
$O\longleftarrow[(3,-1,1),(5,-1,1),(3,2),(4,1,2),(4,3),(6,-1,4),(7,6)]$
\item
$j\longleftarrow8$
\item[]  Carry out the  row operations in $O$ on column $8$.
\item[]
$E \longleftarrow \left[ \begin {array}{BBB|BBR|BBG|GGG|GGG} 
   1&1&1&&&&&&&&&&&&
\\ &-1&-1&0&0&0&1&1&&&&&&&
\\ &&-1&1&0&0&1&1&1&&&&&&
\\ &&&1&1&1&0&0&0&1&1&1&&&
\\ &&&&1&0&1&0&0&0&0&0&1&1&1
\\ &&&&&&1&1&0&1&0&0&0&0&0
\\ &&&&&&&1&1&0&1&0&1&0&0
\\ &&&&&&&&&1&1&1&0&1&0
\\ &&&&&&&&&&&&1&1&1\end {array}
\right]$
\item[]  Identify column $8$ as a pivot.
\item[]
$p\longleftarrow\{1,2,3,4,5,7,8\}$
\item[]
$\tilde{q}\longleftarrow\{6\}$
\item[] No row operations needed on column $8$.
\item[]
$E\longleftarrow\left[ \begin {array}{BBB|BBR|BBG|GGG|GGG} 
   1&1&1&&&&&&&&&&&&
\\ &-1&-1&0&0&0&1&1&&&&&&&
\\ &&-1&1&0&0&1&1&1&&&&&&
\\ &&&1&1&1&0&0&0&1&1&1&&&
\\ &&&&1&0&1&0&0&0&0&0&1&1&1
\\ &&&&&&1&1&0&1&0&0&0&0&0
\\ &&&&&&&1&1&0&1&0&1&0&0
\\ &&&&&&&&&1&1&1&0&1&0
\\ &&&&&&&&&&&&1&1&1\end {array}
\right]$
\item[]  Nothing to append to $O$.
\item[]
$O\longleftarrow[(3,-1,1),(5,-1,1),(3,2),(4,1,2),(4,3),(6,-1,4),(7,6)]$
\item
$j\longleftarrow9$
\item[]  Identify column $9$ as periodic non-pivot.
\item[]
$E\longleftarrow\left[ \begin {array}{BBB|BBR|BBT|GGG|GGG} 
   1&1&1&&&&&&&&&&&&
\\ &-1&-1&0&0&0&1&1&&&&&&&
\\ &&-1&1&0&0&1&1&1&&&&&&
\\ &&&1&1&1&0&0&0&1&1&1&&&
\\ &&&&1&0&1&0&0&0&0&0&1&1&1
\\ &&&&&&1&1&0&1&0&0&0&0&0
\\ &&&&&&&1&1&0&1&0&1&0&0
\\ &&&&&&&&&1&1&1&0&1&0
\\ &&&&&&&&&&&&1&1&1\end {array}
\right]$ 
\item
$j\longleftarrow10$
\item[]  Carry out the  row operations in $O$ on column $10$.
\item[]
$E \longleftarrow \left[ \begin {array}{BBB|BBR|BBT|BGG|GGG} 
   1&1&1&&&&&&&&&&&&
\\ &-1&-1&0&0&0&1&1&&0&&&&&
\\ &&-1&1&0&0&1&1&1&1&&&&&
\\ &&&1&1&1&0&0&0&0&1&1&&&
\\ &&&&1&0&1&0&0&0&0&0&1&1&1
\\ &&&&&&1&1&0&0&0&0&0&0&0
\\ &&&&&&&1&1&1&1&0&1&0&0
\\ &&&&&&&&&1&1&1&0&1&0
\\ &&&&&&&&&&&&1&1&1\end {array}
\right]$
\item[]  Identify column $10$ as a pivot.
\item[]
$p\longleftarrow\{1,2,3,4,5,7,8,10\}$
\item[]
$\tilde{q}\longleftarrow\{6\}$
\item[] No row operations needed on column $10$.
\item[]
$E\longleftarrow\left[ \begin {array}{BBB|BBR|BBT|BGG|GGG} 
   1&1&1&&&&&&&&&&&&
\\ &-1&-1&0&0&0&1&1&&0&&&&&
\\ &&-1&1&0&0&1&1&1&1&&&&&
\\ &&&1&1&1&0&0&0&0&1&1&&&
\\ &&&&1&0&1&0&0&0&0&0&1&1&1
\\ &&&&&&1&1&0&0&0&0&0&0&0
\\ &&&&&&&1&1&1&1&0&1&0&0
\\ &&&&&&&&&1&1&1&0&1&0
\\ &&&&&&&&&&&&1&1&1\end {array}
\right]$
\item[]  Nothing to append to $O$.
\item[]
$O\longleftarrow[(3,-1,1),(5,-1,1),(3,2),(4,1,2),(4,3),(6,-1,4),(7,6)]$
\item
$j\longleftarrow11$
\item[]  Carry out the  row operations in $O$ on column $11.$
\item[]
$E \longleftarrow \left[ \begin {array}{BBB|BBR|BBT|BRG|GGG} 
   1&1&1&&&&&&&&&&&&
\\ &-1&-1&0&0&0&1&1&&0&0&&&&
\\ &&-1&1&0&0&1&1&1&1&1&&&&
\\ &&&1&1&1&0&0&0&0&0&1&&&
\\ &&&&1&0&1&0&0&0&0&0&1&1&1
\\ &&&&&&1&1&0&0&1&0&0&0&0
\\ &&&&&&&1&1&1&0&0&1&0&0
\\ &&&&&&&&&1&1&1&0&1&0
\\ &&&&&&&&&&&&1&1&1\end {array}
\right]$
\item[] Identify column $11$ as a basic non-pivot.
\item[]
$p\longleftarrow\{1,2,3,4,5,7,8,10\}$
\item[]
$\tilde{q}\longleftarrow\{6,11\}$
\item[] No row operations needed on column $11$.
\item[]
$E\longleftarrow\left[ \begin {array}{BBB|BBR|BBT|BRG|GGG} 
   1&1&1&&&&&&&&&&&&
\\ &-1&-1&0&0&0&1&1&&0&0&&&&
\\ &&-1&1&0&0&1&1&1&1&1&&&&
\\ &&&1&1&1&0&0&0&0&0&1&&&
\\ &&&&1&0&1&0&0&0&0&0&1&1&1
\\ &&&&&&1&1&0&0&1&0&0&0&0
\\ &&&&&&&1&1&1&0&0&1&0&0
\\ &&&&&&&&&1&1&1&0&1&0
\\ &&&&&&&&&&&&1&1&1\end {array}
\right]$
\item[]  Nothing to append to $O$.
\item[]
$O\longleftarrow[(3,-1,1),(5,-1,1),(3,2),(4,1,2),(4,3),(6,-1,4),(7,6)]$
\item[]
We have identified $n-1$ basic non-pivot columns, so we abort forward elimination.
\item Perform backward elimination on the pivot columns and basic non-pivot columns.
\item[]
$E \longleftarrow \left[ \begin {array}{BBB|BBR|BBT|BRG|GGG} 
   1&&&&&0&&&&&-1&&&&
\\ &-1&&&&1&&&&&2&&&&
\\ &&-1&&&-1&&&1&&-3&&&&
\\ &&&1&&1&&&0&&2&1&&&
\\ &&&&1&0&&&0&&-2&0&1&1&1
\\ &&&&&&1&&0&&2&0&0&0&0
\\ &&&&&&&1&1&&-1&0&1&0&0
\\ &&&&&&&&&1&1&1&0&1&0
\\ &&&&&&&&&&&&1&1&1\end {array}
\right]$ 
\item Perform normalization on the pivot columns and basic non-pivot columns.
\item[]
$E\longleftarrow\left[ \begin {array}{BBB|BBR|BBT|BRG|GGG} 
   1&&&&&0&&&&&-1&&&&
\\ &1&&&&-1&&&&&-2&&&&
\\ &&1&&&1&&&1&&3&&&&
\\ &&&1&&1&&&0&&2&1&&&
\\ &&&&1&0&&&0&&-2&0&1&1&1
\\ &&&&&&1&&0&&2&0&0&0&0
\\ &&&&&&&1&1&&-1&0&1&0&0
\\ &&&&&&&&&1&1&1&0&1&0
\\ &&&&&&&&&&&&1&1&1\end {array}
\right]$ 
\end{enumerate}

\item \ \emph {Construct a matrix $M\in\k[s]^{n\times\left(  n-1\right)  } $ whose columns form a $\mu$-basis of $a$}:
\begin{enumerate}
\item $M \longleftarrow \left[
\begin{array}[c]{cc}
0 & 0\\
0 & 0\\
0 & 0
\end{array}
\right]$

\item $M \longleftarrow \left[
\begin{array}[c]{cc}
0 & 0\\
0 & s^3\\
s & 0
\end{array}
\right]$
\item $M\longleftarrow \left[
\begin{array}
[c]{cc}%
-s & 1-2s-2s^{2}-s^{3}\\
1 & 2+2s+s^{2}+s^{3}\\
-1+s & -3
\end{array}
\right]
$
\end{enumerate}
\end{enumerate}
\end{example}

\section{Theoretical Complexity Analysis}
\label{sect-compl}

In this subsection, we analyze the theoretical (asymptotic worst case) complexity of the $\mu$-basis algorithm given in the previous section. We will do so under the assumption that the time for any arithmetic operation is constant. 

\begin{theorem}\label{thm-complexity}
The complexity of the $\mu$-basis algorithm given in the previous section is
$$
O(d^2n + d^3 +  n^2).
$$
\end{theorem}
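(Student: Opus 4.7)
The plan is to bound the cost of each of the three steps of the $\mu$-Basis Algorithm separately; the three summands $d^2n$, $d^3$, and $n^2$ in the claimed bound will come from step~1, step~2, and step~3 respectively, with the additional cross term $nd$ being absorbed.

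For step~1, the matrix $A$ has dimensions $(2d+1)\times n(d+1)$, so $O(d^2 n)$ entries, each either zero or a coefficient of some $a_i$ read off directly. Filling them in therefore takes $O(d^2 n)$.

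The heart of the argument is step~2, where two bounds must be established. First, I would argue that the number of columns actually processed (rather than skipped as periodic non-pivots) is $|p| + |\tilde{q}| = O(n+d)$: by Lemma~\ref{lem-card} we have $|\tilde{q}| = n-1$, and $|p| \leq \rank(A) \leq 2d+1$ since $A$ has only $2d+1$ rows. Every column in a residue class (mod $n$) that already contains a detected basic non-pivot can be skipped in $O(1)$, thanks to the periodicity Lemma~\ref{periodic}. Second, I would bound the length $|O|$ of the accumulated list of row operations: for each new pivot placed in row $i$ during forward elimination, at most $2d+1-i$ eliminations are appended to $O$, so summing over at most $2d+1$ pivots yields $|O| = O(d^2)$. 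Applying the operations of $O$ to each of the $O(n+d)$ processed columns costs $O(|O|) = O(d^2)$ per column, totalling $O((n+d)d^2) = O(nd^2 + d^3)$ for forward elimination. The subsequent backward elimination and normalization touch only the $O(n+d)$ processed columns and $|p|$ pivot rows, contributing at most $O(nd + d^2)$, which is absorbed.

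For step~3, initializing the $n\times(n-1)$ matrix $M$ with zeros costs $O(n^2)$; the single loop in 3(b) takes $O(n)$; and the nested loop in 3(c) performs $|p|\cdot(n-1) = O(nd)$ constant-time monomial updates. Hence step~3 costs $O(n^2 + nd)$. Summing the three contributions gives $O(d^2 n) + O(nd^2 + d^3) + O(n^2 + nd) = O(d^2 n + d^3 + n^2)$, since $nd$ is absorbed into $nd^2$ for $d\geq 1$. The main technical obstacle is the bookkeeping in step~2: justifying carefully that the periodicity optimization together with the rank bound really does limit the total elimination work to $O((n+d)d^2)$; once this is accepted, the remaining accounting for steps 1 and 3 is straightforward.
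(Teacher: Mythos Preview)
Your proposal is correct and follows essentially the same decomposition as the paper: both of you identify that step~2 amounts to Gauss--Jordan work on a $(2d+1)\times O(n+d)$ submatrix (the pivot and basic non-pivot columns), giving $O(d^2(n+d))=O(d^2n+d^3)$, while steps~1 and~3 contribute $O(d^2n)$ and $O(n^2+nd)$ respectively. The paper's argument for step~2 is shorter---it simply quotes the textbook $O(k^2\ell)$ bound for Gauss--Jordan on a $k\times\ell$ matrix---whereas you unpack it by bounding $|O|=O(d^2)$ and multiplying by the $O(n+d)$ processed columns; both routes yield the same estimate.

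One small slip: your bound of $O(nd+d^2)$ for backward elimination and normalization is understated. Backward elimination entails $O(|p|^2)=O(d^2)$ row operations (one for each above-pivot entry in each pivot column), and applying each to the $n-1$ basic non-pivot columns costs $O(n)$, giving $O(nd^2)$ rather than $O(nd)$. This does not affect your final bound, since $O(nd^2)$ is already present from forward elimination, but the stated sub-bound is not quite right.
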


\begin{proof}
We will trace the theoretical complexity for each step of the algorithm.

\begin{enumerate}
\item
\begin{enumerate}
\item To determine $d$, we scan through each of the $n$ polynomials in $a$ to
identify the highest degree term, which is always $\leq d$. Thus, the complexity
for this step is $O(dn)$.

\item We identify $n(d+1)$ values to make up $c_{0},\ldots,c_{d}$. Thus, the
complexity for this step is $O(dn)$.

\item We construct a matrix with $(2d+1)n(d+1)$ entries. Thus, the complexity for
this step is $O(d^{2}n)$.
\end{enumerate}

\item With the partial Gauss-Jordan elimination, we  perform row operations only on the (at most) $2d+1$ pivot columns of $A$ and the $n-1$ basic non-pivot columns of $A$. Thus, we perform Gauss-Jordan elimination on a $(2d+1)\times (2d+n)$ matrix.
In general, for a $k\times l$ matrix, Gauss-Jordan elimination has complexity $O(k^{2}%
l)$. Thus, the complexity for this step is $O(d^{2}(d+n))$. 

\item 
\begin{enumerate}
\item We  fill 0 into the  entries of an $n\times (n-1)$ matrix $M$. 
Thus, the complexity of this step is $O(n^2)$.
\item We update   entries of the matrix $n-1$ times. Thus, the complexity of this step is $O(n)$.
\item We update  entries of the matrix $|p| \times (n-1)$ times. Note that $|p| = \text{rank}(A) \le 2d+1$.
Thus
 the complexity of this step is $O(dn)$.
\end{enumerate}

\end{enumerate}

\noindent By summing up, we have%
\[
O\left(  dn+dn+d^{2}n+d^{2}(d+n)+n^2+n+dn\right)  
 =O\left(  d^2n+d^{3}+n^2 \right)
\]
\end{proof}

\begin{remark}\label{rem-sparse}\rm
Note that the $n^2$  term in the above complexity is solely due to step 3(a), where the matrix $M$ is initialized with zeros. If one uses a {\em sparse\/} representation of the matrix (storing only non-zero elements),
then one can skip the initialization of the matrix $M$. As a result, the complexity can be improved to  
$O\left(d^2n+d^3\right)$.
\end{remark}

%We wish to compare the $\mu$-Basis Algorithm in this paper to the algorithm presented in \cite{song-goldman-2009}, which we reprint here for the readers' convenience.

\begin{remark}\label{rem-sg-complexity}\rm\rm[Comparison with Song-Goldman Algorithm]
As far as we are aware, the theoretical complexity of the algorithm by   Song and Goldman \cite{song-goldman-2009} has  not yet been published. 
Here we roughly estimate the complexity of this algorithm to be  $O(dn^5+d^2n^4)$.  It will require a more  rigorous analysis to prove/refute this apparent complexity, which is beyond the scope of this paper.   
For the readers' convenience, we reproduce the  algorithm  published   in  \cite{song-goldman-2009} on pp. 220 -- 221
in our notation, before analyzing its complexity.

\rm $\text{   }$ \\
Input: $a \in \mathbb{K}[s]^n$ with $\gcd(a)=1$\\
Output: A $\mu$-basis of $a$
\begin{enumerate}
\item Create the $r = C^n_2$ ``obvious" syzygies as described in Lemma \ref{lem-d} and label them $u_1,\ldots,u_r$.
\item Set $m_i = LV(u_i)$ and $d_i = \deg(u_i)$ for $i=1,\ldots,r$.
\item Sort $d_i$ so that $d_1 \ge d_2 \ge \ldots \ge d_r$ and re-index $u_i$, $m_i$.
\item Find real numbers $\alpha_1,\alpha_2,\ldots,\alpha_r$ such that $\alpha_1m_1+\alpha_2m_2+\cdots+\alpha_rm_r = 0$.
\item Choose the lowest integer $j$ such that $\alpha_j \not= 0$, and update $u_j$ by setting $$u_j = \alpha_ju_j + \alpha_{j+1}s^{d_j-d_{j+1}}u_{j+1} + \cdots + \alpha_rs^{d_j-d_r}u_r.$$ If $u_j \equiv 0$, discard $u_j$ and set $r=r-1$; otherwise set $m_j = LV(u_j)$ and $d_j = \deg(u_j)$.
\item If $r = n-1$, then output the $n-1$ non-zero vector polynomials $u_1,\ldots,u_{n-1}$ and stop; otherwise, go to Step 3.
\end{enumerate}

\noindent Finding a null vector in step 4  by partial Gauss-Jordan elimination requires performing row operations on (at most) $n+1$ columns. Since each column contains $n$ entries, we conclude that this step has complexity $O(n^3)$. Performing the ``update" operation in step 5 of the algorithm has complexity $O(dn^2)$. Step 6 implies that, in the worst case, the algorithm repeats steps 4 and 5 at most  $\left(d\,\frac  {n(n-1)}2-d\right)$ times. The reason is as follows. Since the algorithm starts with the $C^n_2 = \frac{n(n-1)}{2}$ obvious syzygies  and each has   degree $\leq d$, the (worst case) total degree of the syzygies at the beginning of the algorithm is $d\, \frac{n(n-1)}{2}$. The algorithm ends only when the total degree is $d$. If each repetition of steps 4 and 5 reduces the total degree by 1, then the steps are repeated $\left(d\,\frac  {n(n-1)}2-d\right)$ times. Thus, the total computational complexity appears  to be $O(dn^5+d^2n^4)$.\end{remark}

%%%%%%
\section{Implementation, experiments, comparison}
\label{sect-experiment}
We implemented  the $\mu$-basis algorithm presented in this paper and the one described in Song-Goldman \cite{song-goldman-2009}.  For the sake of simplicity, from now on, we will
call these two algorithms HHK and SG. In Section~\ref{sect-impl}, we discuss our implementation. In Section~\ref{sect-time}, we describe the  experimental performance of both algorithms. An experimental timing corresponds to a point $(d,n,t)$, where $d$ is the degree,  $n$  is the length of the input polynomial vector,  and  $t$ is the time in seconds it took for our codes to produce the output.  For each algorithm,  we fit a surface through the experimental data points. Our fitting models are based on the theoretical complexities obtained in  Section~\ref{sect-compl}. In   Section~\ref{sect-comp},  we compare the performance of the two algorithms.
%%%
\subsection{Implementation}\label{sect-impl}
We implemented both algorithms (HHK and SG) in the computer algebra system Maple~\cite{maple}. The codes and examples are available on the web:
\begin{center}
\url{http://www.math.ncsu.edu/~zchough/mubasis.html} 
\end{center}
We post two versions of the code:
\begin{itemize}
\item[] \texttt{program\_rf} : over rational number field $\mathbb{Q}$.
\item[] \texttt{program\_ff} : over finite field $\mathbb{F}_p$ where $p$ is an arbitrary prime number.
\end{itemize}
Now we explain how the two algorithms (HHK\ and SG) have been implemented.
\begin{itemize}
\item  Although both algorithms could be written in a non-interpreted language such as the   C-language, making the running time significantly shorter,
we  implemented both algorithms in Maple~\cite{maple} for the following reasons.
\begin{enumerate}
\item Maple allows fast prototyping  of the algorithms, making it easier to write and  read the programs written in Maple.

\item It is expected that potential applications of $\mu$-bases will often be written in computer algebra systems such as Maple. 
\end{enumerate}

 \item Both algorithms contain a step in which null vectors are computed (step 2 of HHK and step 4 of SG). 
Although Maple has  a built-in routine for  computing  a basis of the null space for the input matrix, we did not use this built-in routine because we do not need the entire null basis, but only a certain subset of basis vectors with desired properties, consisting of  $n-1$ vectors for HHK   and a single vector for SG.
For this reason, we implemented partial Gauss-Jordan elimination.

\item For the rational field implementation of the SG algorithm, we produced the null vector in step 4 with integer entries in order to avoid rational number arithmetic (which is expensive due to integer gcd computations) in the subsequent steps of the algorithm.

\item  Dense representations of matrices were used for both algorithms.  As shown in  Remark~\ref{rem-sparse}, it is easy to exploit sparse representations for HHK, but it was not clear how one could exploit   sparse representations for SG.  Thus, in order to ensure fair comparison, we used dense representations for both algorithms.

\end{itemize}

%%%%%
%\subsection*{Setup}

\subsection{Timing and fitting}\label{sect-time}
%In Figure~\ref{fig-timings}, we report the timings.
We explain the setup for our experiments
so that the timings reported here can be reproduced independently.
\begin{itemize}
\item The programs were executed using Maple 2015 version 
running on Apple iMac  (Intel i 7-2600, 3.4 GHz, 16GB).
\item  The inputs were randomly-generated: for various values of $d$ and $n$, the coefficients were taken randomly  from  $\mathbb{F}_5$, with a uniform distribution.
\item In order to get reliable timings, especially when the computing time is small relative to the clock resolution, we ran each program several times on the same input  and computed the average of the computing times.   
\item The execution of a program on an input was cut off if its  computing time  exceeded  120 seconds.
\end{itemize}

\begin{figure}[h!] \centering
\begin{minipage}[b]{0.45\linewidth} \centering
\includegraphics[scale=0.45]{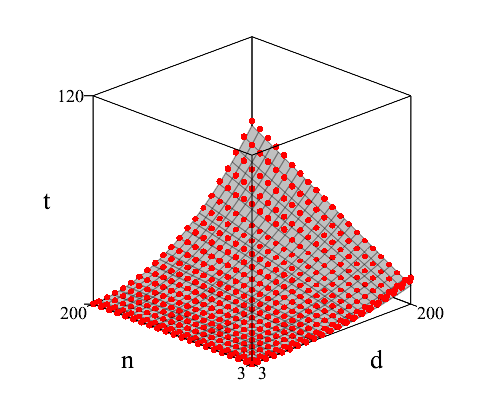} \caption{HHK algorithm timing}
\label{fig-HHK}
\end{minipage} \begin{minipage}[b]{0.45\linewidth}
\centering \includegraphics[scale=0.45]{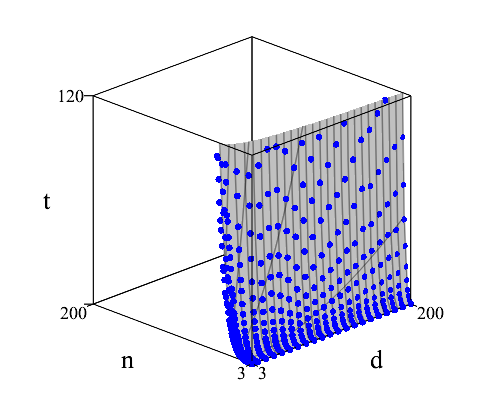}
\caption{SG algorithm timing}
\label{fig-SG}
\end{minipage}
\end{figure}

\noindent Figure~\ref{fig-HHK} shows the experimental timing for the HHK algorithm, while Figure~\ref{fig-SG} shows the experimental timing for the SG algorithm. The algorithms were run on randomly-generated examples with specified $d$ and $n$, and they ran in time $t$.
For each figure, the axes represent the range of values $d = 3,\ldots,200$, \hskip2mm $n = 3,\ldots,200$, and $t = 0,\ldots,120$, where $t$ is the timing  in seconds.
Each dot  $(d,n,t)$ represents an  experimental timing.

%%%%%%%

The background gray surfaces are fitted  to the experimental data. The  fitting  model is based on  the theoretical complexities from Section~\ref{sect-compl}.
 The fitting was computed using least squares. For HHK,  based on Theorem~\ref{thm-complexity}, we chose a model  for the timing, $t= \alpha_1d^2n+\alpha_2d^3+\alpha_3n^2$,  
where $\alpha$'s are unknown constants to be determined.
After substituting  the experimental  values $(d,n,t)$, we obtain an over-determined  system of linear equations in the $\alpha$'s. We find $\alpha$'s that minimize the sum of squares of errors.
For SG, we used the same procedure with  the timing model    $t= \beta_1dn^5+\beta_2d^2n^4$ based on  Remark~\ref{rem-sg-complexity}.

We generated the following functions:
 \begin{eqnarray}
\label{eq-tHHK} t_{HHK} &\approx& 10^{-6}\cdot(7.4\ {d}^{2}n+ 1.2 \ {d}^{3}+ 1.2\ {n}^{2}) 
\\ 
\label{eq-tSG} t_{SG}  &\approx& 10^{-7}\cdot(2.6\ d{n}^{5}+ 0.6\ {d}^{2}{n}^{4}) 
\end{eqnarray}
For our experimental data, the residual standard deviation for  the HHK-timing model \eqref{eq-tHHK}  is 0.686 seconds, while   the residual standard deviation for  the SG-timing model   \eqref{eq-tSG}  is 11.886  seconds.

We observe from Figures~\ref{fig-HHK} and \ref{fig-SG} that for a fixed $d$, the HHK algorithm's dependence on $n$ is almost linear, while the SG algorithm's dependence on $n$ is highly nonlinear. %(for $n \le 200$).
 In fact, for the latter, the dependence is so steep that the algorithm was unable to terminate in under 120 seconds for most values of $n$, thus explaining the large amount of blank space in  Figure \ref{fig-SG}. For a fixed $n$, the HHK algorithm's dependence on $d$ is nonlinear, while the SG algorithm's dependence on $d$ is almost linear. 
 
%The  experimental observations are supported by the fitting formulae for $t_{HHK}$ and $t_{SG}$. For the HHK timing formula, the largest coefficient appears on the $d^2\,n$ term, while for the SG timing formula, the largest coefficient appears on the $d\,n^5$ term. 
 
%%%%%
\subsection{Comparison}\label{sect-comp}
Two pictures below represent performance comparisons.
\begin{figure}[h!] \centering
\begin{minipage}[b]{0.45\linewidth} \centering
\includegraphics[scale=0.45]{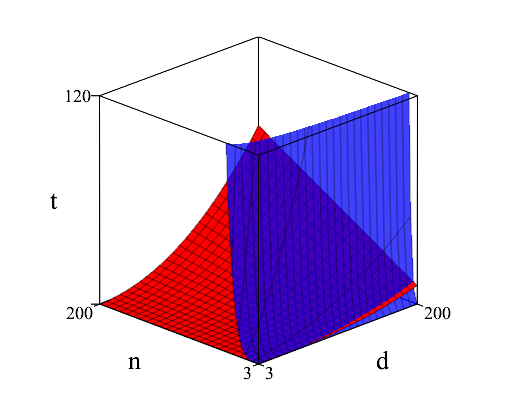} \caption{HHK (red) and SG (blue).}
\label{fig-3d-comp}
\end{minipage} \begin{minipage}[b]{0.45\linewidth}
\centering \includegraphics[scale=0.45]{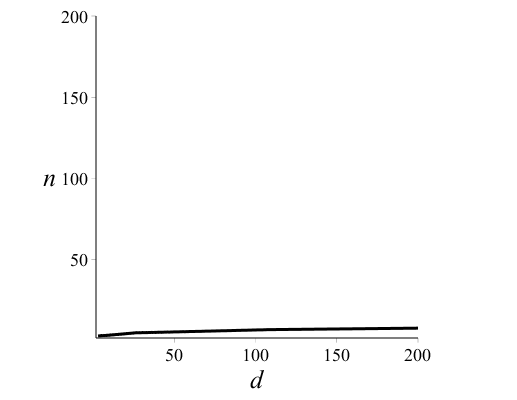}
\caption{Tradeoff graph}
\label{fig-2d-comp}
\end{minipage}
\end{figure}

%In Figure~\ref{fig-compare}, we compare the timings.
%\begin{figure}[h!]
%\begin{center}
%\begin{tabular}{cc}
%\includegraphics[scale=1.5]{plot_HHK_SG.png} &
%\includegraphics[scale=1.5]{plot_tradeoff.png} 
%\end{tabular}
%\end{center}
%\caption{Comparison: HHK (red) and SG (blue)}
%\label{fig-compare}
%\end{figure}

\begin{itemize}
\item Figure~\ref{fig-3d-comp} shows the fitted surfaces from Figures~\ref{fig-HHK} and~\ref{fig-SG} on the same graph.
%  for the HHK (in red) and SG (in blue) algorithms in the same graph. 
The axes represent the range of values $n = 3,\ldots,200$, $d = 3,\ldots,200$, and $t = 0,\ldots,120$, where $t$ is the timing of the algorithms in seconds.
\item Figure~\ref{fig-2d-comp} shows a tradeoff graph for the two algorithms. The curve in the figure represents values of $d$ and $n$ for which the two algorithms run with the same timing. Below the curve, the SG algorithm runs faster, while above the curve, the HHK algorithm runs faster. The ratio of the dominant terms in the fitted  formulae  is $d : n^4$. This ratio manifests itself  in the shape of the tradeoff curve presented in Figure \ref{fig-2d-comp}.

\end{itemize}

\begin{figure}[h!] \centering
\begin{minipage}[b]{0.45\linewidth} \centering
\includegraphics[scale=0.35]{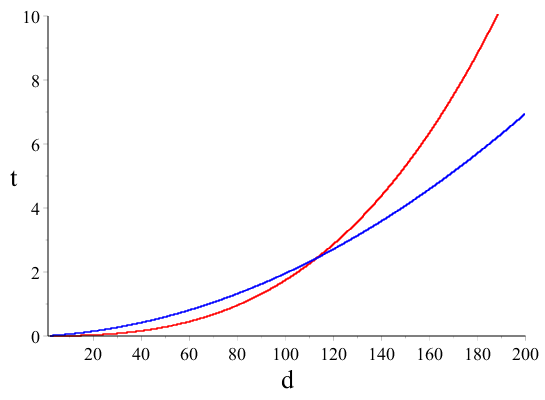} \caption{ $n=7$}
\label{fig-n=7}
\end{minipage} \begin{minipage}[b]{0.45\linewidth}
\centering \includegraphics[scale=0.35]{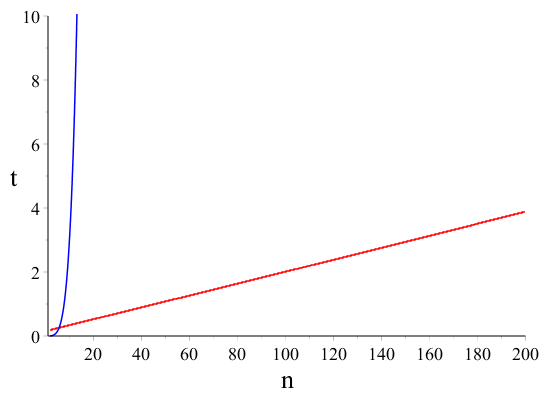}
\caption{{$d=50$}}
\label{fig-d=30}
\end{minipage}
\end{figure}

From Figure \ref{fig-3d-comp}, we observe that for a fixed $d$, as $n$ increases the HHK algorithm vastly outperforms the SG algorithm.
In contrast, for a  fixed value of $n$, as $d$ increases the SG algorithm outperforms the HHK algorithm.
 {The order by which SG runs faster is less than the order by which HHK runs faster for fixed $d$ and increasing $n$}.
 We underscore this observation by displaying  two-dimensional slices of Figure \ref{fig-3d-comp}.  Figure~\ref{fig-n=7} represents the slice in the $d$-direction  with $n=7$, while Figure~\ref{fig-d=30}
  represents the slice in the $n$-direction with $d=50$. As before, HHK is represented by red and SG by blue.
  
 %%%%
 \section{Discussion}\label{sect-discussion}
 %%%
 In this section, we elaborate on some topics that were briefly discussed in the Introduction and Section~\ref{sect-problem}  and discuss  a  natural generalization of the $\mu$-basis  computation problem   --  a problem of computing minimal  bases of the kernels of $m\times n$ polynomial matrices.

%%%

\vskip2mm \noindent\emph{The original definition and proof of existence:} 
The original definition of a $\mu$-basis appeared on p 824 of a paper by Cox, Sederberg, and Chen  \cite{cox-sederberg-chen-1998} and is  based on  the ``sum of the degrees'' property (Statement~\ref{pr-min} of Proposition~\ref{prop-equiv}). The definition also mentions an equivalent  ``reduced  representation'' (Statement~\ref{pr-reduced} of Proposition~\ref{prop-equiv}). The proof of the existence theorem (Theorem 1 on p.~824 of \cite{cox-sederberg-chen-1998}) appeals to the celebrated Hilbert Syzygy Theorem \cite{hilbert-1890}  and utilizes Hilbert polynomials, which  first appeared in the same paper  \cite{hilbert-1890}    under the name of characteristic functions. The definition of $\mu$-basis in terms of the degrees, given in \cite{cox-sederberg-chen-1998}, is compatible with the tools that have been chosen to show its  existence.

%%%%%%%%%%
\vskip2mm \noindent\emph{The homogeneous version of the problem:} 
  It is instructive to compare the inhomogeneous and homogenous versions of the problem. In fact, in order to invoke the Hilbert Syzygy Theorem in the proof of the existence of a $\mu$-basis, Cox, Sederberg, and Chen  restated the problem in the homogeneous setting (see pp.~824-825 of \cite{cox-sederberg-chen-1998}).  

Let $\hat a =[\hat a_1(x,y), \dots,\hat a_n(x,y)]$ be a row vector of $n$ homogeneous polynomials over a field $\k$, each of which has the same degree.  
As before, a syzygy of $\hat a$ is a column vector  $h= [h_1(x,y), \dots,h_n(x,y)]^T$ of polynomials (not necessarily homogeneous), such that $\hat a\,h=0$. The set $\syz(\hat a)$ is a module over $\k[x,y]$,
   and the Hilbert Syzygy Theorem implies that it is a free module of rank $n-1$ possessing a homogeneous basis. Let $n-1$  homogeneous polynomial vectors $\hat u_1(x,y),\dots,\hat u_{n-1}(x,y)$  comprise an \emph{arbitrary}  homogeneous basis of $\syz(\hat a)$. Define dehomogenizations:  $a(s)=[a_1(s),\dots, a_n(s)]$, where $a_i(s)=\hat a_i(s,1)\in\k[s]$, $i=1,\dots, n$ and  $u_j(s)=\hat u_j (s,1)\in\k[s]^n$, $j=1,\dots, n-1$. An argument, involving Hilbert polynomials  on  p. 825 of \cite{cox-sederberg-chen-1998}, shows that $u_1,\dots, u_{n-1}$ is a \emph{$\mu$-basis} of $\syz(a)$.  
   
     Let us now start   with a polynomial vector $a(s)=[a_1(s),\dots, a_n(s)]\in\k[s]^n$ of degree $d$ in the sense of Definition~\ref{def-basic}, and consider its homogenization $\hat a =[\hat a_1(x,y), \dots,\hat a_n(x,y)]$, where $\hat a_i(x,y)=y^d\,a_i\left(\frac x y\right)$,  $i=1,\dots, n$. It is not true that homogenezation of an arbitrary basis  of $\syz(a)$ produces a basis of $\syz(\hat a)$. Indeed, 
     let $u_1$ and $u_2$ be the columns of matrix $M$ in Example~\ref{ex-prob}. Then $u_1+u_2$ and $u_2$ is a basis of $\syz(a)$, with each vector having degree 3.
Their  homogenizations $\widehat{ u_1 +u_2}$ and $\widehat{ u_2}$ are homogeneous polynomial vectors of degree 3, and, therefore, they can not possibly generate a homogeneous  vector 
$\hat u_1(x,y)=y\, u_1\left(\frac xy \right)=[-x,\,y,\,x-y]^T$ of degree 1, which clearly belongs to $\syz(\hat a)$. 
A rather simple argument that utilizes the ``reduced representation'' property  (Statement~\ref{pr-reduced}  of Proposition~\ref{prop-equiv}) can be used to show that for an arbitrary non-zero vector $a\in\k[s]^n$, homogenization of any \emph{$\mu$-basis} of $\syz(a)$ produces a homogeneous basis of $\syz(\hat a )$.

The above discussion can be summarized in the following statement:  the set of homogeneous bases of  $\syz(\hat a)$ is in one-to-one correspondence with the set of $\mu$-bases of $\syz(a)$, where $a\in\k[s]$  is the dehomogenization of  $\hat a\in\k[x,y]$. Therefore, the algorithm developed in this paper can be used to compute homogeneous bases of  $\syz(\hat a)$.

%%%%%
\vskip2mm \noindent\emph{$\mu$-basis algorithms and  $\gcd$ computation:} 
In contrast to the algorithm developed by  Song and Goldman in \cite{song-goldman-2009},  the algorithm presented  in this paper produces  a $\mu$-basis even when the input vector $a$ has a non-trivial     greatest common divisor. Moreover, once a $\mu$-basis is computed, one can \emph{immediately} find $\gcd(a)$ using Statement~\ref{pr-outer} of Proposition~\ref{prop-equiv}. Indeed,  let
$h$ denote the outer product of a $\mu$-basis     $u_1,\dots, u_{n-1}$. If $M$ is the matrix generated by the algorithm, then $h_i= 
(-1)^i \,|M_i|$, where $M_i$ is an $(n-1)\times (n-1)$ submatrix of $M$ constructed by removing the $i$-th row. By Statement~\ref{pr-outer} of Proposition~\ref{prop-equiv},   there exists a non-zero $\alpha\in\k$ such that
$$ a= \alpha \,\gcd(a)\,h.$$
Let $i\in \{1,\dots, n\}$ be such that $a_i$ is a non-zero polynomial. Then $ \gcd(a)$ is computed by  long division of $a_i$ by $ h_i$ and then dividing the quotient  by its leading coefficient to make it monic.
In comparison, the  algorithm developed in \cite{song-goldman-2009} produces a $\mu$-basis of $a$ multiplied by $\gcd(a)$. From the output of this algorithm and  Statement~\ref{pr-outer} of Proposition~\ref{prop-equiv}, one finds  $\gcd(a)^{n-2}$. Song and Goldman discuss how to  recover $\gcd(a)$ itself by \emph{repeatedly} running their algorithm. They also run computational experiments to compare the efficiency of computing $\gcd$ by iterating the SG $\mu$-basis algorithm versus the standard Euclidean algorithm. Investigation of the efficiency of computing   $\gcd$ by using the HHK  $\mu$-basis algorithm and a long division can be a subject of a future  work.

%$$ \syz(\hat a)=\widehat{\syz(a)}:=$$

%%%%%%%%%%
\vskip2mm \noindent\emph{Kernels of $m\times n$ polynomial matrices:} A natural generalization of the $\mu$-basis problem is obtained by  considering  kernels, or nullspaces, of $m\times n$ polynomial matrices of rank $m$.
% It can be proved that, in one variable case, the kernel is a free module of rank $n-m$.
A basis of the nullspace is called minimal if  the ``minimal degree'' Statement~\ref{pr-min} of Proposition~\ref{prop-equiv} is satisfied (with $n-1$ replaced by $n-m$).   One can easily adapt the argument in the proof of Theorem~2 in \cite{song-goldman-2009} to show that, in this more general setting,   Statement~\ref{pr-min} is equivalent to the ``independence of the leading vectors'' Statement~\ref{pr-LV} and  to the ``reduced representation'' Statement~\ref{pr-reduced}  of Proposition~\ref{prop-equiv}. One can also show with an example that the ``sum of the degrees'' Statement~\ref{pr-sum}  (with the degree of a polynomial matrix defined to be the  maximum of the degrees of its entries) is no longer equivalent to Statements~\ref{pr-LV} and~\ref{pr-reduced}. There is a large body of  work on computing minimal  bases (see for instance \cite{beelen1987},  \cite{antoniou2005}, \cite{zhou-2012} and references therein). This research direction  seems to be developing independently of the body of work devoted to $\mu$-bases. The algorithm presented in this paper can be generalized to compute minimal bases of the kernels of $m\times n$ polynomial matrices. The details and comparison  with existing algorithms will be the subject of a  forthcoming work.

  \newpage
  \vskip5mm
  
\noindent\textbf{Acknowledgement:} We would like to thank Ron Goldman for a careful reading of the first version of our paper and Vincent Neiger for pointing out a body of literature devoted to the computation of minimal nullspaces of $m\times n$ polynomial matrices. We are grateful to the anonymous referees for a number of valuable suggestions and, in particular, for a suggestion to discuss the homogeneous version of the problem.  The research was partially supported by
the  US NSF CCF-1319632 grant. %%%%%
%biblio
%%%%%

\bibliographystyle{plain}
\bibliography{paper}

%\begin{thebibliography}{10}

%\end{thebibliography}

\end{document}